\newtheorem{theorem}{Theorem}[section]
\newtheorem{lemma}[theorem]{Lemma}
\newtheorem{proposition}[theorem]{Proposition}
\newtheorem{corollary}[theorem]{Corollary}
\theoremstyle{definition}
\theoremstyle{remark}
\newtheorem{remark}[theorem]{Remark}
\numberwithin{equation}{section}
\begin{document}

\title{Biharmonic hypersurfaces with constant scalar curvature in space forms}

\author{Yu Fu}
\address{School of Mathematics, Dongbei University of Finance and Economics,
Dalian 116025, P. R. China}
\email{yufudufe@gmail.com}

\author{Min-Chun Hong}
\address{Department of Mathematics, The University of Queensland, Brisbane,
QLD 4072, Australia}
\email{hong@maths.uq.edu.au}

\subjclass[2010]{Primary 53D12, 53C40; Secondary 53C42}



\keywords{Biharmonic maps, Biharmonic submanifolds, Chen's
conjecture, Generalized Chen's conjecture}

\begin{abstract}
Let $M^n$ be a biharmonic hypersurface with constant scalar
curvature in a space form $\mathbb M^{n+1}(c)$. We show that $M^n$
has constant mean curvature if $c>0$ and $M^n$ is minimal if
$c\leq0$, provided that the number of distinct principal curvatures
is no more than 6. This partially confirms Chen's conjecture and Generalized
Chen's conjecture. As  a consequence, we prove that there exist
no proper biharmonic hypersurfaces with constant scalar
curvature in Euclidean space $\mathbb E^{n+1}$ or hyperbolic space $\mathbb H^{n+1}$ for $n<7$.
\end{abstract}

\maketitle
\markboth{Fu and Hong}
{Biharmonic hypersurfaces with constant scalar curvature}
\section{Introduction}
In 1983,  Eells and  Lemaire \cite{Eells} introduced the concept of
{\em biharmonic maps} in order to generalize classical theory of
harmonic maps. A biharmonic map $\phi$  between an $n$-dimensional
Riemannian manifold  $(M^n,g)$ and an $m$-dimensional Riemannian
manifold $(N^m,h)$ is  a critical point of the bienergy functional
\begin{eqnarray*}
E_2(\phi)=\frac{1}{2}\int_M|\tau(\phi)|^2dv_g,
\end{eqnarray*}
where $\tau(\phi)= {\rm trace \nabla d\phi}$ is the tension field of
$\phi$ that vanishes for a harmonic map.
More clearly, the Euler-Lagrange equation associated to the
bienergy is given by
\begin{eqnarray*}
\tau_2(\phi)=-\Delta\tau(\phi)-{\rm trace}\,
R^{N}(d\phi,\tau(\phi))d\phi=0,
\end{eqnarray*}
where $R^{N}$ is the curvature tensor of $N^m$ (e.g.\cite{jiang1987}). We call  $\phi$  to be  a biharmonic map if its
bitension field $\tau_2(\phi)$ vanishes.

Biharmonic maps between Riemannian manifolds have been extensively
studied by some geometers. In particular, many authors investigated
a special class of biharmonic maps named {\em biharmonic
immersions}. An immersion $\phi:(M^n,g)\longrightarrow(N^m,h)$ is
biharmonic  if and only if its mean curvature vector field
$\overrightarrow{H}$ fulfills the fourth-order semi-linear elliptic
equations  (e.g. \cite{CMO2001})
\begin{eqnarray}
\Delta\overrightarrow{H}+{\rm trace}\,
R^{N}(d\phi,\overrightarrow{H})d\phi=0.
\end{eqnarray}
It is well-known that any minimal immersion (satisfying $\overrightarrow{H}=0$) is harmonic. The
non-harmonic biharmonic immersions are called proper biharmonic.

We should mention that {\em biharmonic submanifolds} in  a Euclidean
space $\mathbb E^m$ were independently defined by B. Y. Chen  in the middle of 1980s (see \cite{Chen1991}) with
the geometric condition $\Delta \overrightarrow{H}=0$ , or equivalently $\Delta^2 \phi=0$.
Interestingly, both  biharmonic
submanifolds  and biharmonic immersions in Euclidean spaces coincide  with each other.

In recent years,  the classification
problem of biharmonic submanifolds  has attracted a great attention in geometry. In particular,
there is a longstanding conjecture on biharmonic submanifolds due to B. Y. Chen \cite{Chen1991} in 1991:

{\bf Chen's conjecture}: {\em Every biharmonic submanifold in
Euclidean space $\mathbb E^m$ is minimal}.

Until now, Chen's conjecture remains open, even for hypersurfaces.
Only partial answers to Chen's conjecture have been obtained for
more than three decades, e.g. \cite{akutagawa2013},
\cite{alias2013}, \cite{chenbook2015},\cite{Ou20121}.
 In the case of hypersurfaces,
Chen's conjecture is true for the following special cases:
\begin{itemize}
\item surfaces in $\mathbb E^3$ \cite{Chen1991}, \cite{jiang1987};
\item hypersurfaces with at most two distinct principal curvatures in $\mathbb E^m$ \cite{Dimitri1992};
\item hypersurfaces in $\mathbb E^4$ \cite{HasanisVlachos1995} (see also\cite{defever1998});

\item $\delta(2)$-ideal and
$\delta(3)$-ideal hypersurfaces in $\mathbb E^m$\cite{chenMunteanu2013};

\item weakly convex hypersurfaces in $\mathbb E^m$\cite{luoyong2014};

\item hypersurfaces with at most three distinct principal curvatures in $\mathbb E^m$
\cite{fu1};

\item generic hypersufaces with irreducible principal curvature vector fields in $\mathbb E^m$ \cite{KoisoUrakawa2014};

\item invariant hypersurfaces of cohomogeneity one in $\mathbb E^m$ \cite{montaldo}.
\end{itemize}

In 2001, Caddeo, Montaldo and Oniciuc \cite{CMO2001} proposed the following generalized Chen's
conjecture:

{\bf Generalized Chen's conjecture}: {\em Every biharmonic
submanifold in a Riemannian manifold with non-positive sectional
curvature is minimal}.

Recently, Ou and  Tang in \cite{Ou2012} constructed a family of
counter-examples that  the generalized Chen's conjecture is false
when the ambient space has non-constant negative sectional
curvature. However,  the generalized Chen's conjecture remains open
when the ambient spaces have constant sectional curvature. For more
recent developments of the generalized Chen's conjecture, we refer
to \cite{chenbanyen2014}, \cite{chenbook2015}, \cite{montaldo2006},
 \cite{Oniciuc2012}, \cite{nakauchiurakawa}, \cite{Ou2016}.

We should point out that the classification of proper biharmonic
submanifolds in Euclidean spheres is rather rich and interesting.
The first example of proper biharmonic hypersurfaces is a
generalized Clifford torus $S^p(\frac1{\sqrt2})\times
S^q(\frac1{\sqrt2})\hookrightarrow \mathbb S^{n+1}$ with $p\neq q$
and $p+q=n$ given by Jiang \cite{jiang1986}. The complete
classifications of biharmonic hypersurfaces in $\mathbb S^3$ and
$\mathbb S^4$ were obtained in \cite{CMO2001}, \cite{BMO20102}.
Moreover, biharmonic hypersurfaces with at most three distinct
principal curvatures in $\mathbb S^n$ were classified in
\cite{BMO20102}, \cite{fu2}. For more details, we refer the readers
to Balmus, Caddeo, Montaldo, Oniciuc et al.'s work \cite{BMO2013},
\cite{loubeau}, \cite{Oniciuc2002}, \cite{Oniciuc2012},
\cite{ichinoura2010}.

In general, the classification problem of proper biharmonic
hypersurfaces in space forms becomes more complicated when the
number of distinct principal curvatures is four or more.

In view of the above aspects, it is reasonable to study biharmonic
submanifolds with some geometric conditions. In geometry,
hypersurfaces with constant scalar curvature have been intensively
studied by many geometers for the rigidity problem  and
classification problem, for instance, see the well-known paper of
Cheng-Yau \cite{Chengyau1977}. Some estimate for scalar curvature of
compact proper biharmonic hypersurfaces with constant scalar
curvature in spheres was obtained in \cite{BMO2008}. Recently, it
was proved in \cite{fu3} that a biharmonic hypersurface with
constant scalar curvature in the 5-dimensional space forms $\mathbb
M^{5}(c)$ necessarily has constant mean curvature.

Motivated by above results, in this paper we consider biharmonic hypersurfaces $M^n$
with constant scalar curvatures in a
space form $\mathbb M^{n}(c)$.  More precisely, we obtain:
\begin{theorem}
Let $M^n$ be an orientable biharmonic hypersurface with at most six
distinct principal curvatures in $\mathbb M^{n+1}(c)$. If the scalar curvature $R$ is constant, then $M^n$ has
constant mean curvature.
\end{theorem}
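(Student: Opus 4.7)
The plan is to argue by contradiction: assume that the mean curvature function $H$ is not constant, and work on a connected open set $U \subset M^n$ where $\nabla H \neq 0$, aiming to derive a contradiction. The starting point is the standard decomposition of the biharmonic equation for a hypersurface in $\mathbb{M}^{n+1}(c)$ into its normal and tangential parts,
\begin{equation*}
\Delta H + (|A|^{2} - nc)H = 0,\qquad 2A(\nabla H) + nH\,\nabla H = 0,
\end{equation*}
where $A$ is the shape operator. The tangential equation forces $E_{1} := \nabla H/|\nabla H|$ to be a principal direction on $U$ with principal curvature $\lambda_{1} = -nH/2$. Complete $E_1$ to a local orthonormal frame $\{E_{1},\ldots,E_{n}\}$ of principal directions with curvatures $\lambda_{i}$. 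The constant scalar curvature hypothesis together with the Gauss equation $R = n(n-1)c + n^{2}H^{2} - |A|^{2}$ then gives the algebraic relation $|A|^{2} = n^{2}H^{2} + C$ for some constant $C$, which can be fed back into the normal equation to express $\Delta H$ as an explicit function of $H$.

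The next step is to extract structure equations from the Codazzi identity. Writing $\omega_{ij}^{k} = \langle \nabla_{E_{i}}E_{j},E_{k}\rangle$ and using that $E_{1}$ is gradient-like, differentiating $\sum_{i}\lambda_{i} = nH$ and $\sum_{i}\lambda_{i}^{2} = n^{2}H^{2} + C$ along the frame yields explicit formulae for $E_{1}(\lambda_{i})$ and for the off-diagonal coefficients $\omega_{1i}^{j}$ whenever $\lambda_{i} \neq \lambda_{j}$, and forces $E_{k}(\lambda_{i}) = 0$ for $k \geq 2$ at the level of the symmetric functions involved. This pins down the geometry of the eigenvalue distribution on $U$ and reduces the problem, modulo the constraints $\lambda_{1} = -nH/2$, $\sum\lambda_{i} = nH$, and $\sum\lambda_{i}^{2} = n^{2}H^{2}+C$, to an overdetermined first-order system in the remaining distinct principal curvatures $\mu_{2},\ldots,\mu_{k}$.

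I would then run a case analysis on the number $k \leq 6$ of distinct principal curvatures. For $k \leq 3$ the statement already follows from the results of Hasanis--Vlachos and Fu cited earlier; so the substantive work is for $k \in \{4,5,6\}$. In each case, group indices by eigenvalue blocks with multiplicities $m_{1}=1, m_{2},\ldots,m_{k}$, write the elementary symmetric functions $\sigma_{j}(\mu_{2},\ldots,\mu_{k})$, and combine the two standing algebraic constraints with the first-order Codazzi relations obtained above. Applying $E_{1}$ to these relations and using $E_{1}(H)\neq 0$ converts them into polynomial identities in $H$ and the $\mu_{i}$'s; iterating until the identities become independent forces each $\mu_{i}$, hence $H$, to be constant on $U$, contradicting $\nabla H \neq 0$. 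For $c>0$ the conclusion is constant mean curvature; for $c\leq 0$ the normal equation together with $|A|^{2} \geq nH^{2}$ rules out a non-zero constant, so $H \equiv 0$, yielding the minimality statement already stated in the introduction.

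The principal obstacle is clearly the case $k = 6$: the number of unknown $\mu_{i}$'s and of admissible multiplicity patterns $(m_{1},\ldots,m_{6})$ grows rapidly, and the polynomial elimination needed to close the argument becomes combinatorially heavy. A uniform treatment based on Newton's identities for the $\sigma_{j}(\mu_{2},\ldots,\mu_{k})$, together with a careful bookkeeping of which $\omega_{ij}^{k}$ can or must vanish in each multiplicity pattern, will be essential to keep the case analysis tractable and to explain why the argument stops working beyond $k=6$.
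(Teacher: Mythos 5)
Your setup (contradiction, $e_1\parallel\nabla H$ a principal direction with $\lambda_1=-nH/2$, the two constraints $\sum_i\lambda_i=nH$ and $|A|^2=n^2H^2+C$ from constant $R$) matches the paper, but the two steps that actually carry the proof are missing. First, you claim that differentiating the symmetric constraints along the frame ``forces $E_k(\lambda_i)=0$ for $k\geq 2$.'' With up to five distinct eigenvalue functions among $\lambda_2,\dots,\lambda_n$, the two identities you have (first and second power sums of the $\lambda_i$) are nowhere near enough to conclude this for the individual eigenvalues. The paper's key technical device (Lemma 3.3) is a bootstrap: using the ODEs $e_1(\lambda_i)=(\lambda_i-\lambda_1)\omega_{ii}^1$ and $e_1(\omega_{ii}^1)=(\omega_{ii}^1)^2+\lambda_1\lambda_i+c$, it generates five identities $\sum_{i\ge2}(\omega_{ii}^1)^k=f_k(t)$, $k=1,\dots,5$, with right-hand sides depending only on the arclength parameter $t$ along the integral curve of $e_1$; then (Lemma 3.4) differentiating these in the directions $e_i$, $i\ge2$, gives a $5\times5$ linear system whose coefficient matrix is a Vandermonde matrix in the mutually distinct quantities $u_i$, whence $e_i(u_k)=0$ and finally $e_i(\lambda_k)=0$. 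This is exactly where the hypothesis ``at most six distinct principal curvatures'' enters (five unknowns, five power sums); your sketch neither produces the five independent identities nor explains the role of the bound $6$, so the reduction to functions of one variable is unjustified.

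Second, your endgame --- ``iterating until the identities become independent forces each $\mu_i$, hence $H$, to be constant,'' via Newton's identities --- does not identify any mechanism guaranteeing that the resulting polynomial relations are nontrivial. After Lemma 3.4 the paper still needs a genuinely separate argument: Gauss-equation relations for $\langle R(e_i,e_j)e_k,e_1\rangle$ and $\langle R(e_p,e_q)e_p,e_q\rangle$ (Lemmas 3.5--3.6) lead to a case analysis on the vanishing of the $\omega_{pq}^r$; in the main case one obtains the linear relation $\omega_{ii}^1=\varphi\lambda_i+\psi$ with $\varphi,\psi$ functions of $t$ satisfying explicit Riccati-type ODEs, and a long resultant-style elimination of $\varphi,\psi$ against equations (4.5)--(4.9) produces a polynomial equation $\sum_{i=0}^{47}c_i\lambda_1^i=0$ with constant coefficients whose top coefficient $c_{47}$ is shown to be nonzero --- that nonvanishing is the whole point, and it is what forces $\lambda_1$, hence $H$, to be constant. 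Without an analogue of the relation $\omega_{ii}^1=\varphi\lambda_i+\psi$ and a verification that the eliminant is not identically zero, your polynomial-elimination plan could terminate in the trivial identity $0=0$ and prove nothing; as written the proposal is a plausible strategy outline rather than a proof.
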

In general,
it is difficult to deal  with the biharmonic immersion equation (1.1)  due to its high nonlinearity. In order to prove
Theorem 1.1, we use some new ideas to overcome the difficulty of
treating the equation of a biharmonic hypersurface. More precisely,
we transfer the problem into a system of algebraic equations (see
Lemma 3.3), so  we can determine the behavior of the principal
curvature functions by investigating the solution of the system of
algebraic equations (see Lemma 3.4). Then, we are able to
prove that a biharmonic hypersurface  with constant scalar curvatures
in a space form $\mathbb M^{n}(c)$ must have constant mean
curvature, provided that the number of distinct principal curvature
is no more than six. We would like to point out that our approach in this paper is
different from those in \cite{fu2}, \cite{fu3},
\cite{defever1998}, \cite{BMO20102}.

\begin{remark}
Balmus-Montaldo-Oniciuc in \cite{BMO2008} conjectured that
the proper biharmonic hypersurfaces in $\mathbb S^{n+1}$ must
have constant mean curvature. Theorem 1.1 with $c=1$ gives a
partial answer to this conjecture.

We should  point out that the complete classification of proper
biharmonic hypersurfaces with constant mean curvature in a sphere is
still open for the case that the number of distinct principal
curvatures is more than three (cf. \cite{Oniciuc2012}).
\end{remark}

Moreover, combining these
results with the biharmonic equations in Section 2, we have:
\begin{corollary}
Any biharmonic hypersurface with constant scalar
curvature and with at most six distinct principal curvatures in Euclidean space $\mathbb E^{n+1}$ or
hyperbolic space $\mathbb H^{n+1}$ is minimal.
\end{corollary}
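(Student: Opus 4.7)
The plan is to derive Corollary 1.3 as a short consequence of Theorem 1.1 together with the splitting of the biharmonic equation for hypersurfaces in space forms that is recorded in Section 2.

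First I would apply Theorem 1.1 in the two cases $c=0$ and $c=-1$ (and more generally any $c<0$). This immediately yields that a biharmonic hypersurface $M^n$ in $\mathbb E^{n+1}$ or $\mathbb H^{n+1}$ with constant scalar curvature and at most six distinct principal curvatures must have constant mean curvature $H$. Thus the only remaining task is to upgrade \emph{constant} mean curvature to \emph{vanishing} mean curvature when $c\leq 0$.

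Second, I would invoke the standard tangential/normal decomposition of equation (1.1) for a hypersurface in $\mathbb M^{n+1}(c)$ (this is the biharmonic hypersurface equation collected in Section~2). Schematically it reads
\begin{equation*}
\Delta H + H\bigl(|A|^{2}-nc\bigr)=0,\qquad 2A(\nabla H)+\tfrac{n}{2}\nabla H^{2}=0,
\end{equation*}
where $A$ is the shape operator and $|A|^2$ its squared norm. Since $H$ is constant we have $\nabla H=0$ and $\Delta H=0$, so the tangential part is automatic and the normal part reduces to the purely algebraic condition $H(|A|^{2}-nc)=0$.

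Third, I would finish using the Cauchy--Schwarz inequality applied to the principal curvatures $\lambda_{1},\dots,\lambda_{n}$: since $nH=\sum_{i}\lambda_{i}$ and $|A|^{2}=\sum_{i}\lambda_{i}^{2}$, one has $|A|^{2}\geq nH^{2}$. Hence when $c\leq 0$,
\begin{equation*}
|A|^{2}-nc\;\geq\; nH^{2}-nc\;\geq\; nH^{2}\;\geq\;0,
\end{equation*}
with strict inequality whenever $H\neq 0$. Combined with $H(|A|^{2}-nc)=0$ this forces $H\equiv 0$, so $M^{n}$ is minimal. There is no real obstacle here: the content is entirely in Theorem~1.1, and the passage from constant $H$ to $H=0$ in the non-positively curved ambient is a one-line algebraic observation. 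The only minor care needed is to verify that the form of the biharmonic equation quoted above is exactly the one displayed in Section~2 (the sign conventions on $c$ and the Laplacian may differ, but the conclusion is the same).
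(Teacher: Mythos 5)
Your proposal is correct and follows essentially the same route as the paper: Theorem 1.1 gives constant $H$, and then the normal part of the biharmonic equation in Proposition 2.1, $\Delta H + H\,\mathrm{trace}\,A^{2} = ncH$, together with $\mathrm{trace}\,A^{2}\geq nH^{2}$ and $c\leq 0$, forces $H\equiv 0$. This is exactly the paper's (one-line) argument "combining these results with the biharmonic equations in Section 2," so nothing further is needed.
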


Thus, this result gives a partial answer to Chen's conjecture and
the generalized Chen's conjecture.

Furthermore, as a direct consequence, we  get the following characterization
result:
\begin{corollary}
Any biharmonic hypersurface with constant scalar
curvature in Euclidean space $\mathbb E^{n+1}$ or
hyperbolic space $\mathbb H^{n+1}$ for $n<7$ has to be minimal.
\end{corollary}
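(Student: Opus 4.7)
The plan is to observe that Corollary 1.4 is an immediate specialization of Corollary 1.3. The point is that the distinct-principal-curvature hypothesis appearing in Theorem 1.1 and Corollary 1.3 is automatic once the dimension of the hypersurface is small enough, so no new work is needed beyond unpacking definitions.

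First, I would recall that an $n$-dimensional hypersurface $M^n$ in an $(n+1)$-dimensional ambient space has at each point exactly $n$ principal curvatures (counted with multiplicity), as the shape operator is a symmetric endomorphism of the $n$-dimensional tangent space. In particular, the number of \emph{distinct} principal curvatures at any point is bounded above by $n$. Under the hypothesis $n<7$, we therefore have $n\le 6$, so every such biharmonic hypersurface automatically has at most six distinct principal curvatures at every point.

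Next, I would apply Corollary 1.3 directly. Its hypotheses ask for (i) biharmonicity, (ii) constant scalar curvature, (iii) the ambient space being $\mathbb E^{n+1}$ or $\mathbb H^{n+1}$, and (iv) at most six distinct principal curvatures. Items (i)--(iii) are assumed in the statement of Corollary 1.4, and (iv) was just verified from the dimension constraint. The conclusion of Corollary 1.3 is that $M^n$ is minimal, which is exactly the claim of Corollary 1.4.

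There is no real obstacle here; the only conceivable subtlety would be worrying about whether the bound on the number of distinct principal curvatures must hold pointwise in the sense required by Corollary 1.3, but this is automatic since $n\le 6$ bounds the count at every point uniformly. Thus the proof of Corollary 1.4 reduces to a one-line dimensional remark followed by invoking the previous corollary.
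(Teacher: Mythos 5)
Your argument is correct and is exactly the paper's (implicit) reasoning: since the shape operator acts on an $n$-dimensional tangent space, $n<7$ forces at most six distinct principal curvatures pointwise, and Corollary 1.3 then gives minimality. Nothing further is needed.
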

\begin{remark}
We could replace or weaken the condition {\em constant scalar
curvature} in Theorem 1.1 by {\em constant length of the second
fundamental form} or {\em linear Weingarten type}, i.e. the scalar
curvature $R$ satisfying $R=aH+b$ for some constants $a$ and $b$. In
fact, the discussion is extremely similar to the proof of Theorem
1.1 and the same conclusion holds true as well.
\end{remark}

The paper is organized as follows. In Section 2, we recall some
necessary background for theory of hypersurfaces and equivalent
conditions for biharmonic hypersurfaces. In Section 3, we prove some
useful lemmas (Lemma 3.1-Lemma 3.6), which are crucial to prove the
main theorem. Finally, in Section 4, we give a proof of Theorem 1.1.

\section{Preliminaries}

In this section, we recall some basic material for the theory of
hypersurfaces immersed in a Riemannian space form.

Let $\phi: M^n\rightarrow\mathbb{M}^{n+1}(c)$ be an isometric
immersion of a hypersurface $M^n$ into a space form
$\mathbb{M}^{n+1}(c)$ with constant sectional curvature $c$. Denote
the Levi-Civita connections of $M^n$ and $\mathbb{M}^{n+1}(c)$ by
$\nabla$ and $\tilde\nabla$, respectively. Let $X$ and $Y$ denote
the vector fields tangent to $M^n$ and let $\xi$ be a unit normal
vector field. Then the Gauss and Weingarten formulas (cf.
\cite{chenbook2015}) are given respectively by
\begin{align}
\tilde\nabla_XY&=\nabla_XY+h(X,Y),\label{l23}\\
\tilde\nabla_X\xi&=-AX,\label{l16}
\end{align}
where $h$ is the second fundamental form  and $A$ is the Weingarten
operator. Note that the second fundamental form $h$ and
the Weingarten operator $A$ are related by
\begin{eqnarray}\label{l3}
\langle h(X,Y),\xi\rangle=\langle AX,Y\rangle.
\end{eqnarray}
The mean curvature vector field $\overrightarrow{H}$ is defined by
\begin{eqnarray}\label{md}
\overrightarrow{H}=\frac{1}{n}{\rm trace}~h.
\end{eqnarray}
Moreover, the Gauss and Codazzi equations are given  respectively
by
\begin{eqnarray*}
R(X,Y)Z=c\big(\langle Y, Z\rangle X-\langle X,Z\rangle Y\big)+\langle
AY,Z\rangle AX-\langle AX,Z\rangle AY,
\end{eqnarray*}
\begin{eqnarray*}
(\nabla_{X} A)Y=(\nabla_{Y} A)X,
\end{eqnarray*}
where $R$ is the curvature tensor of $M^n$ and
$(\nabla_XA)Y$ is given by
\begin{eqnarray}\label{l7}
(\nabla_XA)Y=\nabla_X(AY)-A(\nabla_XY)
\end{eqnarray}
for all $X, Y, Z$ tangent to $M^n$.

Assume that $\overrightarrow{H}=H\xi$ and $H$ denotes the mean
curvature.

By identifying the tangent and the normal parts of the biharmonic
condition (1.1) for hypersurfaces in a space form $\mathbb
M^{n+1}(c)$, the following characterization result for $M^n$ to be
biharmonic was obtained (see also \cite{CMO2002}, \cite{BMO20102}).
\begin{proposition}
The immersion $\phi: M^n\rightarrow\mathbb{M}^{n+1}(c)$ of a
hypersurface $M^n$ in an $n+1$-dimensional space form $\mathbb
M^{n+1}(c)$ is biharmonic if and only if
\begin{equation}
\begin{cases}
\Delta H+H {\rm trace}\, A^2 =ncH,\\
2A\,{\rm grad}H+nH{\rm grad}H=0.
\end{cases}
\end{equation}
\end{proposition}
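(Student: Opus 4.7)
The plan is to expand the biharmonic equation (1.1) using the hypersurface structural equations \eqref{l23}, \eqref{l16}, \eqref{l3} and then to decompose the resulting identity along the splitting $TM\oplus\mathbb{R}\xi$.

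First I would simplify the curvature contribution. Since $\mathbb{M}^{n+1}(c)$ has constant sectional curvature, $R^{N}(X,Y)Z = c(\langle Y,Z\rangle X-\langle X,Z\rangle Y)$; plugging in $\vec H=H\xi$ and a tangent orthonormal frame $\{e_{i}\}$ one immediately obtains $\mathrm{trace}\,R^{N}(d\phi,\vec H)d\phi=-ncH\xi$, which is purely normal.

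Next I would compute $\Delta\vec H=\Delta(H\xi)$ at a fixed point $p$ using a frame $\{e_{i}\}$ geodesic at $p$ (so $\nabla_{e_{i}}e_{j}(p)=0$). Weingarten \eqref{l16} gives $\tilde\nabla_{e_{i}}(H\xi)=e_{i}(H)\xi-HAe_{i}$. Differentiating once more, using Gauss \eqref{l23} together with \eqref{l3} to recognize $h(e_{i},Ae_{i})=\langle A^{2}e_{i},e_{i}\rangle\xi$, summing over $i$, and working with the sign convention $\Delta=-\sum_{i}e_{i}e_{i}$ at $p$ that matches the one used in (1.1), the calculation collects into
\[
\Delta(H\xi)=\bigl(\Delta H+H\,\mathrm{trace}\,A^{2}\bigr)\xi+2A(\mathrm{grad}\,H)+H\sum_{i}\nabla_{e_{i}}(Ae_{i}).
\]
The last term is cleaned up by Codazzi: using $(\nabla_{e_{i}}A)X=(\nabla_{X}A)e_{i}$ together with \eqref{l7}, for any tangent $X$,
\[
\sum_{i}\langle(\nabla_{e_{i}}A)e_{i},X\rangle=\sum_{i}\langle(\nabla_{X}A)e_{i},e_{i}\rangle=X(\mathrm{trace}\,A)=nX(H),
\]
so $\sum_{i}\nabla_{e_{i}}(Ae_{i})=n\,\mathrm{grad}\,H$ at $p$.

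Substituting these into (1.1) and reading off the components along $TM\oplus\mathbb{R}\xi$ produces the normal scalar equation $\Delta H+H\,\mathrm{trace}\,A^{2}=ncH$ and the tangential vector equation $2A(\mathrm{grad}\,H)+nH\,\mathrm{grad}\,H=0$, which is exactly the claimed system; the converse is immediate from the same calculation read backwards. The main obstacle is the careful bookkeeping inside $\tilde\nabla_{e_{i}}\tilde\nabla_{e_{i}}(H\xi)$: one must correctly isolate the piece giving $\Delta H$, the cross term $2A(\mathrm{grad}\,H)$ coming from differentiating $-HAe_{i}$, the scalar piece $H\,\mathrm{trace}\,A^{2}\,\xi$ produced by $h(e_{i},Ae_{i})$, and the tangential piece that only assumes its final form $nH\,\mathrm{grad}\,H$ after invoking the Codazzi identity, all while staying consistent with the Laplacian sign used in (1.1). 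With those pieces correctly separated, the tangential/normal decomposition is automatic.
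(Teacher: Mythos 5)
Your proof is correct and follows exactly the route the paper indicates for Proposition 2.1: the paper itself gives no proof, merely noting that the system follows "by identifying the tangent and the normal parts of the biharmonic condition (1.1)" and citing the literature, and your computation of $\Delta(H\xi)$ via the Gauss, Weingarten and Codazzi equations, together with the curvature term $-ncH\xi$, is precisely that standard decomposition. The bookkeeping (the cross term $2A(\mathrm{grad}\,H)$, the term $H\,\mathrm{trace}\,A^{2}\,\xi$ from $h(e_i,Ae_i)$, and the Codazzi identity giving $\sum_i\nabla_{e_i}(Ae_i)=n\,\mathrm{grad}\,H$) is carried out consistently with the sign convention (2.7), so the equivalence holds as claimed.
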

The Laplacian operator $\Delta$ on $M^{n}$ acting on a smooth
function $f$ is given by
\begin{eqnarray}
\Delta f=-\mathrm {div}(\nabla f)=-\sum_{i=1}^{n}<\nabla_{e_{i}}(\nabla f),
e_{i}>=-\sum_{i=1}^{n}(e_{i}e_{i}-\nabla_{e_{i}}e_{i})f.
\end{eqnarray}
The following result was obtained in \cite{fu2}.
\begin{theorem}
Let $M^n$ be an orientable proper biharmonic hypersurface with at
most three distinct principal curvatures in $\mathbb M^{n+1}(c)$.
Then $M^n$ has constant mean curvature.
\end{theorem}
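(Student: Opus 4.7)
The plan is to argue by contradiction. Suppose the mean curvature $H$ is not constant, so on some connected open set $U \subset M^n$ one has $\mathrm{grad}\,H \neq 0$. The second equation of Proposition 2.1, $2A\,\mathrm{grad}\,H = -nH\,\mathrm{grad}\,H$, shows that $\mathrm{grad}\,H$ is a principal direction with principal curvature $-nH/2$. Set $e_1 := \mathrm{grad}\,H/|\mathrm{grad}\,H|$ on $U$, so that $Ae_1 = \lambda_1 e_1$ with $\lambda_1 = -nH/2$, and extend to a local orthonormal frame $\{e_1,\ldots,e_n\}$ of principal directions, $Ae_i = \lambda_i e_i$. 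By construction, $e_j(H) = 0$ and $e_j(\lambda_1) = 0$ for every $j \geq 2$.

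Now I would split according to the number $m \in \{1,2,3\}$ of distinct principal curvatures. The umbilic case $m=1$ forces $\lambda_1 = H$, hence $H = -nH/2$ and $H \equiv 0$ on $U$, contradicting $\mathrm{grad}\,H \neq 0$. For $m=2$ with multiplicities $m_1$ and $m_2 = n - m_1$, the trace identity $\mathrm{trace}\,A = nH = -2\lambda_1$ determines $\lambda_2$ as a rational multiple of $\lambda_1$. I would then apply the Codazzi equation $(\nabla_{e_i}A)e_j = (\nabla_{e_j}A)e_i$ in the principal frame to express the connection coefficients $\omega_{ij}^k = \langle\nabla_{e_i}e_j,e_k\rangle$ in terms of the directional derivatives $e_i(\lambda_j)$, using $e_j(\lambda_1) = 0$ for $j \geq 2$ to simplify. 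Substituting into the first biharmonic equation $\Delta H + H\,\mathrm{trace}(A^2) = ncH$ should then yield, after a computation, a polynomial identity forcing $\lambda_1$ (and hence $H$) to be constant on $U$, contradicting $\mathrm{grad}\,H \neq 0$.

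The case $m=3$ is the main obstacle. Let $\lambda_1, \lambda_2, \lambda_3$ be the distinct principal curvatures with multiplicities $m_1, m_2, m_3$ summing to $n$; the trace identity $m_1\lambda_1 + m_2\lambda_2 + m_3\lambda_3 = -2\lambda_1$ supplies one linear relation. Using Codazzi systematically in the principal frame, all $e_j(\lambda_i)$ for $j \geq 2$ and all connection coefficients $\omega_{ij}^k$ can be expressed in terms of the $\lambda_i$ and their derivatives along $e_1$. I would then invoke the Gauss equation applied to pairs of principal vectors to encode the compatibility of the $[e_i,e_j]$ brackets with the Riemann tensor, producing further polynomial relations among $\lambda_1, \lambda_2, \lambda_3$. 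Combined with the first biharmonic equation, this should give an over-determined polynomial system forcing each $\lambda_i$ to be constant and contradicting $\mathrm{grad}\,H \neq 0$. I expect the bookkeeping of the Codazzi and Gauss identities, together with the verification that the resulting polynomial system is inconsistent with non-constant $\lambda_1$, to be the most delicate part of the argument; subcase analysis on the multiplicities $(m_1,m_2,m_3)$, including the possibility $m_1 > 1$, will likely be needed.
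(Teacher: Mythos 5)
Your opening reduction is the standard one: the tangential biharmonic equation makes $\operatorname{grad}H$ a principal direction with eigenvalue $-nH/2$, one takes a principal orthonormal frame with $e_1\parallel\operatorname{grad}H$, and one argues by contradiction on a set where $\operatorname{grad}H\neq0$. That much agrees with how this paper (Section 3) and the literature set things up. But beyond that the proposal is a plan, not a proof, and it has two concrete defects. First, the claim that the Codazzi equations let you express \emph{all} connection coefficients $\omega_{ij}^k$ in terms of the $\lambda_i$ and their $e_1$-derivatives is not true when multiplicities exceed one: Codazzi (the analogues of (3.8)--(3.9)) only controls coefficients that mix distinct principal curvatures, while the coefficients internal to an eigendistribution, and mixed products such as $\omega_{pq}^k\omega_{qp}^k$, remain undetermined --- this is exactly why the present paper's Section 4 must split into Cases A, B, C according to which of these quantities vanish. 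Second, and decisively, the entire analytic core is asserted rather than carried out: the closed ODE system for $\lambda_1$ and the $\omega_{ii}^1$ (the analogues of (3.11)--(3.13)), the proof that the remaining principal curvatures depend only on the parameter along integral curves of $e_1$, and the elimination producing a nontrivial polynomial identity in $\lambda_1$ with constant coefficients and a provably nonzero leading coefficient are all replaced by ``should then yield'' and ``should give an over-determined polynomial system.'' For three distinct principal curvatures this is not routine bookkeeping; it is the substance of the theorem, which in fact occupies the whole of the cited paper \cite{fu2} --- note that the statement here is quoted by the authors from that reference, not reproved in this paper.

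There is also a structural mismatch with the tools you might hope to borrow from this paper: Theorem 2.2 does \emph{not} assume constant scalar curvature, so the mechanism of Lemmas 3.3--3.4 (the Newton-type identities $\sum_i(\omega_{ii}^1)^k=f_k(t)$ and the Vandermonde argument), which relies crucially on $R$ being constant, is unavailable; in the three-curvature case one must find a different route to show that the principal curvatures are functions of $t$ alone and to close the polynomial system. Until those computations are actually performed (including the multiplicity subcases you mention, and the verification that the final polynomial is not identically zero --- the step corresponding to checking $c_{47}\neq0$ in Section 4), the proposal does not establish the theorem.
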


\section{Some lemmas}
We now consider an orientable biharmonic hypersurface $M^n$ $(n>3)$
in a space form $\mathbb M^{n+1}(c)$.

In general, the set $M_A$ of all points of $M^n$, at which the
number of distinct eigenvalues of the Weingarten operator $A$ (i.e.
the principal curvatures) is locally constant, is open and dense in
$M^n$. Since $M^n$ with at most three distinct principal curvatures
everywhere in a space form $\mathbb M^{n+1}(c)$ is CMC, i.e. the mean
curvature is constant (Theorem 2.2), one can work only on the
connected component of $M_A$ consisting by points where the number
of principal curvatures is more than three (by passing to the limit,
$H$ will be constant on the whole $M^n$). On that connected
component, the principal curvature functions of $A$ are always
smooth.

Suppose that, on the component, the mean curvature $H$ is not
constant. Thus, there is a point $p$ where ${\rm grad}\,H (p)\neq0$.
In the following, we will work on an neighborhood of $p$ where ${\rm
grad}\,H (p)\neq0$ at any point of $M^n$.

The second equation of (2.6) shows that ${\rm grad}\,H$ is an
eigenvector of the Weingarten operator $A$ with the corresponding
principal curvature $-nH/2$. We may choose $e_1$ such that $e_1$ is
parallel to ${\rm grad}\,H$, and with respect to some suitable
orthonormal frame $\{e_1,\ldots, e_n\}$, the Weingarten operator $A$
of $M$ takes the following form
\begin{eqnarray}
A=\mathrm{diag}(\lambda_1,\lambda_2, \ldots, \lambda_n),
\end{eqnarray}
where $\lambda_i$ are the principal curvatures and
$\lambda_1=-nH/2$. Therefore, it follows from \eqref{md} that
$\sum_{i=1}^n\lambda_i=nH$, and hence
\begin{eqnarray}
\sum_{i=2}^n\lambda_i=-3\lambda_1.
\end{eqnarray}
Denote by $R$ the scalar curvature and by $B$ the squared length of
the second fundamental form $h$ of $M$. It follows from (3.1) that
$B$ is given by
\begin{eqnarray}
B={\rm trace}\, A^2
=\sum_{i=1}^n\lambda^2_i=\sum_{i=2}^n\lambda^2_i+\lambda^2_1.
\end{eqnarray}
From the Gauss equation, the scalar curvature $R$ is given by
\begin{eqnarray}
R=n(n-1)c+n^2H^2-B=n(n-1)c+3\lambda_1^2-\sum_{i=2}^n\lambda^2_i.
\end{eqnarray}
Hence
\begin{eqnarray}
\sum_{i=2}^n\lambda^2_i=n(n-1)c-R+3\lambda^2_1.
\end{eqnarray}
Since ${\rm grad}\,H=\sum_{i=1}^ne_i(H)e_i$ and $e_1$ is parallel to
${\rm grad}\,H$, it follows that
\begin{eqnarray*}
e_1(H)\neq0,\quad e_i(H)=0, \quad 2\leq i\leq n,
\end{eqnarray*}
and hence
\begin{eqnarray}
e_1(\lambda_1)\neq0,\quad e_i(\lambda_1)=0, \quad 2\leq i\leq n.
\end{eqnarray}
 Put $ \nabla_{e_i}e_j=\sum_{k=1}^n\omega_{ij}^ke_k$
$(1\leq i,j\leq n)$. A direct computation concerning the
compatibility conditions $\nabla_{e_k}\langle e_i,e_i\rangle=0$ and
$\nabla_{e_k}\langle e_i,e_j\rangle=0$ $(i\neq j)$ yields
respectively that
\begin{eqnarray}
\omega_{ki}^i=0,\quad \omega_{ki}^j+\omega_{kj}^i=0,\quad i\neq j.
\end{eqnarray}
The Codazzi equation could yield to
\begin{eqnarray}
e_i(\lambda_j)=(\lambda_i-\lambda_j)\omega_{ji}^j,\\
(\lambda_i-\lambda_j)\omega_{ki}^j=(\lambda_k-\lambda_j)\omega_{ik}^j
\end{eqnarray}
for distinct $i, j, k$.

Moreover, from (3.6) we have
\begin{eqnarray*}
[e_i,e_j](\lambda_1)=0,
\end{eqnarray*}
which yields directly
\begin{eqnarray}
\omega_{ij}^1=\omega_{ji}^1, \quad 2\leq i, j\leq n ~\,{\rm and}\,
~i\neq j.
\end{eqnarray}
\begin{lemma}
Let $M^n$ be an orientable biharmonic hypersurface with non-constant
mean curvature in $\mathbb M^{n+1}(c)$. Then the multiplicity of the
principal curvature $\lambda_1$ $(=-nH/2)$ is one, i.e.
$\lambda_j\neq\lambda_1$ for $2\leq j\leq n$.
\end{lemma}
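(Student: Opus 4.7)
The plan is to prove Lemma 3.1 by contradiction. Suppose that some principal curvature $\lambda_j$ with $2\leq j\leq n$ coincides with $\lambda_1=-nH/2$. Since we are working on the connected component of $M_A$ on which the number of distinct principal curvatures is locally constant, each principal curvature function is smooth and, more importantly, the coincidence $\lambda_j(p)=\lambda_1(p)$ at a single point forces the identity $\lambda_j\equiv\lambda_1$ throughout the neighborhood (both are the same smooth eigenvalue of $A$ with constant multiplicity).

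Next I would apply the Codazzi equation (3.8) with indices $(i,j)=(1,j)$, which gives
\[
e_1(\lambda_j)=(\lambda_1-\lambda_j)\,\omega_{j1}^{j}.
\]
Under the identification $\lambda_j=\lambda_1$, the right-hand side vanishes, so $e_1(\lambda_j)=0$ on the neighborhood.

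On the other hand, differentiating the identity $\lambda_j=\lambda_1$ along $e_1$ yields $e_1(\lambda_j)=e_1(\lambda_1)$, and by (3.6) we have $e_1(\lambda_1)\neq 0$ at every point of the neighborhood (this is where the standing assumption that $H$ is non-constant comes in, together with $\lambda_1=-nH/2$ and $e_1$ being parallel to $\mathrm{grad}\,H$). This flatly contradicts $e_1(\lambda_j)=0$, completing the proof.

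The argument is essentially one line of Codazzi combined with the non-constancy of $H$, so I do not anticipate a technical obstacle. The only subtlety worth stating explicitly is the smoothness/extension step at the outset, namely that on the component of $M_A$ we fixed, a pointwise equality of two principal curvatures propagates to the whole component; this is what allows us to differentiate $\lambda_j=\lambda_1$ and use $e_1(\lambda_1)\neq 0$.
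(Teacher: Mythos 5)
Your argument is correct and is essentially the paper's own proof: both set $i=1$ in the Codazzi relation (3.8) to get $e_1(\lambda_j)=(\lambda_1-\lambda_j)\omega_{j1}^j=0$ and then contradict $e_1(\lambda_1)\neq0$ from (3.6). The only difference is that you spell out the (implicit in the paper) step that the coincidence $\lambda_j=\lambda_1$ holds identically on the working neighborhood, which is a reasonable clarification rather than a new route.
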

\begin{proof}
If $\lambda_j=\lambda_1$ for $j\neq1$, by putting $i=1$ in (3.8) we
get
\begin{eqnarray*}
0=(\lambda_1-\lambda_j)\omega_{j1}^j=e_1(\lambda_j)=e_1(\lambda_1),
\end{eqnarray*}
which contradicts to (3.6).
\end{proof}
\begin{lemma}
The smooth real-valued functions $\lambda_i$ and $\omega_{ii}^1$
$(2\leq i\leq n)$ satisfy the following differential equations
\begin{align}
e_1e_1(\lambda_1)&=e_1(\lambda_1)\Big(\sum_{i=2}^n\omega_{ii}^1\Big)+\lambda_1\big(n(n-2)c-R+4\lambda_1^2\big),\\
e_1(\lambda_i)&=\lambda_i\omega_{ii}^1-\lambda_1\omega_{ii}^1,\\
e_1(\omega_{ii}^1)&=(\omega_{ii}^1)^2+\lambda_1\lambda_i+c.
\end{align}
\end{lemma}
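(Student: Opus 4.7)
The plan is to derive the three equations from distinct structural sources: (3.12) from the Codazzi equation, (3.11) from the first biharmonic equation in (2.6), and (3.13) from the Gauss equation. Before doing so, I would establish two frame facts that make the computations tractable. First, applying Codazzi to the pair $(e_k,e_1)$ with $k\geq 2$ and reading off the $e_1$-component gives $e_k(\lambda_1)=(\lambda_k-\lambda_1)\omega_{1k}^1$; by (3.6) and Lemma 3.1 this forces $\omega_{1k}^1=0$ for $k\geq 2$, and together with $\omega_{11}^1=0$ from (3.7) and $\omega_{11}^k=-\omega_{1k}^1$ we obtain $\nabla_{e_1}e_1=0$, so the integral curves of $e_1$ are geodesics. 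Second, Codazzi applied to the pair $(e_j,e_k)$ with distinct $j,k\geq 2$, projected onto $e_1$ and combined with (3.10), gives $(\lambda_k-\lambda_j)\omega_{jk}^1=0$; when $\lambda_j\neq\lambda_k$ this yields $\omega_{jk}^1=0$, and when $\lambda_j=\lambda_k$ the analogous projection of $(\nabla_{e_k}A)e_1=(\nabla_{e_1}A)e_k$ onto $e_j$ produces $(\lambda_1-\lambda_j)\omega_{k1}^j=0$, hence again $\omega_{jk}^1=0$. Thus $\omega_{jk}^1=0$ for all distinct $j,k\geq 2$.

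Equation (3.12) then follows immediately from (3.8) with $i=1$, namely $e_1(\lambda_i)=(\lambda_1-\lambda_i)\omega_{i1}^i$, together with $\omega_{i1}^i=-\omega_{ii}^1$ from (3.7), which rearranges to $e_1(\lambda_i)=\lambda_i\omega_{ii}^1-\lambda_1\omega_{ii}^1$.

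For (3.11), I expand $\Delta\lambda_1$ via (2.7). Since $e_i(\lambda_1)=0$ for $i\geq 2$, only $e_1e_1(\lambda_1)$ survives among the second-derivative terms; $\nabla_{e_1}e_1=0$ annihilates the $i=1$ connection contribution, and for $i\geq 2$ one has $(\nabla_{e_i}e_i)(\lambda_1)=\omega_{ii}^1 e_1(\lambda_1)$ because only $e_1$ acts non-trivially on $\lambda_1$. This yields
\begin{equation*}
\Delta\lambda_1=-e_1e_1(\lambda_1)+e_1(\lambda_1)\sum_{i=2}^{n}\omega_{ii}^1.
\end{equation*}
The first equation of (2.6), multiplied by $-n/2$ and using $\lambda_1=-nH/2$, becomes $\Delta\lambda_1+\lambda_1 B=nc\lambda_1$ with $B=\mathrm{trace}\,A^2$. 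From (3.3) and (3.5) I obtain $B=n(n-1)c-R+4\lambda_1^2$, and solving for $e_1e_1(\lambda_1)$ collects the $\lambda_1$ terms into the factor $n(n-2)c-R+4\lambda_1^2$, giving (3.11).

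Equation (3.13), the most involved, is obtained by computing $\langle R(e_1,e_i)e_i,e_1\rangle$ two ways. The Gauss equation gives $c+\lambda_1\lambda_i$. Expanding $R(e_1,e_i)e_i=\nabla_{e_1}\nabla_{e_i}e_i-\nabla_{e_i}\nabla_{e_1}e_i-\nabla_{[e_1,e_i]}e_i$ and projecting onto $e_1$, the above frame data collapse the computation: $\langle\nabla_{e_1}\nabla_{e_i}e_i,e_1\rangle=e_1(\omega_{ii}^1)$ (since $\nabla_{e_1}e_1=0$ and $\omega_{1k}^1=0$), $\langle\nabla_{e_i}\nabla_{e_1}e_i,e_1\rangle=0$ (since $\omega_{1i}^1=\omega_{1i}^i=0$ and $\omega_{ik}^1=0$ for distinct $i,k\geq 2$), and $[e_1,e_i]=\omega_{ii}^1 e_i+\sum_{k\neq 1,i}\omega_{1i}^k e_k$, so $\langle\nabla_{[e_1,e_i]}e_i,e_1\rangle=(\omega_{ii}^1)^2$ (the remaining terms produce $\omega_{1i}^k\omega_{ki}^1=0$). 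Equating the two expressions for the sectional curvature gives (3.13). The main obstacle is precisely the vanishing $\omega_{jk}^1=0$ for distinct $j,k\geq 2$; once that is in hand, every potential cross-term in the Gauss computation dies, and the equation takes its clean form. Applying Codazzi in two different projections—one handling $\lambda_j\neq\lambda_k$ and the other handling $\lambda_j=\lambda_k$—is what avoids any need to further refine the frame inside higher-dimensional eigenspaces.
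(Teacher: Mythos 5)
Your proposal is correct and follows essentially the same route as the paper: (3.12) from the Codazzi relation with the frame compatibility $\omega_{i1}^i=-\omega_{ii}^1$, (3.11) from the normal biharmonic equation after expanding $\Delta\lambda_1$ in the adapted frame and using $B=n(n-1)c-R+4\lambda_1^2$, and (3.13) from comparing the Gauss-equation value of the sectional curvature with the direct curvature computation. Your auxiliary frame identities ($\omega_{1k}^1=0$, hence $\nabla_{e_1}e_1=0$, and $\omega_{jk}^1=0$ for distinct $j,k\geq 2$, split according to $\lambda_j\neq\lambda_k$ or $\lambda_j=\lambda_k$) are exactly the paper's relations (3.14)--(3.16), so no further comment is needed.
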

\begin{proof}
Substituting $H=-2\lambda_1/n$ into the first equation of (2.6), and using
(2.7), (3.6), (3.3) and (3.5),  we get (3.11). By putting $i=1$ in
(3.8), combining this with (3.9) gives (3.12).

Next, we will prove equation (3.13).

 For $j=1$ and $i\neq1$ in (3.8),
by (3.6) we have $\omega_{1i}^1=0$ $(i\neq1)$. Combining this with
(3.7), we have
\begin{eqnarray}
\omega_{11}^i=0\quad \mathrm {for} ~~1\leq i \leq n.
\end{eqnarray}
For $j=1$, and $k, i\neq1$ in (3.9) we have
\begin{eqnarray*}
(\lambda_i-\lambda_1)\omega_{ki}^1=(\lambda_k-\lambda_1)\omega_{ik}^1,
\end{eqnarray*}
which together with (3.10) yields
\begin{eqnarray}
\omega_{ki}^1=0,~~~ k\neq i,~~~ \mathrm {if}~~~
\lambda_k\neq\lambda_i.
\end{eqnarray}
For $i\neq j$ and $2\leq i, j\leq n$, if $\lambda_i=\lambda_j$, then
by putting $k=1$ in (3.9) we have
\begin{eqnarray*}
(\lambda_1-\lambda_i)\omega_{i1}^j=0,
\end{eqnarray*}
which together with Lemma 3.1, (3.15) and (3.7) yields
\begin{eqnarray}
\omega_{i1}^j=0,~~~ i\neq j,~~~\mathrm {and}~~~2\leq i, j\leq n.
\end{eqnarray}
From the Gauss equation and (3.1), we have $\langle R(e_1,e_i)e_1,
e_i\rangle=-\lambda_1\lambda_i-c$. On the other hand, the Gauss
curvature tensor $R$ is defined by $
R(X,Y)Z=\nabla_X\nabla_YZ-\nabla_Y\nabla_XZ-\nabla_{[X,Y]}Z$. Using
(3.14), (3.16) and (3.7), a direct computation gives
\begin{eqnarray*}
\langle R(e_1,e_i)e_1,
e_i\rangle=-e_1(\omega_{ii}^1)+(\omega_{ii}^1)^2.
\end{eqnarray*}
Therefore, we obtain differential equation (3.13), which completes
the proof of Lemma 3.2.
\end{proof}
Consider an integral curve of $e_1$ passing through $p=\gamma(t_0)$
as $\gamma(t)$, $t\in I$. Since $e_i(\lambda_1)=0$ for $2\leq i \leq
n$ and $e_1(\lambda_1)\neq0$, it is easy to show that there exists a
local chart $(U; t=x^1,x^2,\ldots, x^m)$ around $p$, such that
$\lambda_1(t, x^2,\ldots, x^m)=\lambda_1(t)$ on the whole
neighborhood of $p$.

In the following, we begin our arguments under the assumption that
the scalar curvature $R$ is always constant. The following system of
algebraic equations is important for us to proceed further.
\begin{lemma}
Assume that $R$ is constant. We have
\begin{align}
\sum_{i=2}^n(\omega_{ii}^1)^k=f_k(t),~~\mathrm {for}~ k=1,\ldots, 5,
\end{align}
where $f_k(t)$ are some smooth real-valued functions with respect to
$t$.
\end{lemma}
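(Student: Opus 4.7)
The plan is to introduce the auxiliary sums $\sigma_{k,m} := \sum_{i=2}^{n} \lambda_i^{k}(\omega_{ii}^1)^{m}$, so that $P_k := \sigma_{0,k}$ is the quantity we want to control, and then to prove inductively that $\sigma_{k,m}$ is a function of $t$ alone for a short, carefully chosen list of pairs $(k,m)$ that terminates with $P_1,\ldots,P_5 = f_1(t),\ldots,f_5(t)$.

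First I would record the three pieces of base data $\sigma_{0,0}=n-1$, $\sigma_{1,0}=-3\lambda_1$ and $\sigma_{2,0}=n(n-1)c-R+3\lambda_1^{2}$, which are functions of $t$ alone thanks to (3.2), (3.5), the hypothesis $R=\mathrm{const}$, and the fact (established in the paragraph before the lemma) that $\lambda_1=\lambda_1(t)$ on the local chart. Equation (3.11) can then be rewritten as $e_1e_1(\lambda_1)=e_1(\lambda_1)\,\sigma_{0,1}+\lambda_1\bigl(n(n-2)c-R+4\lambda_1^{2}\bigr)$; since $e_1(\lambda_1)=\lambda_1'(t)$ is nowhere zero and every other term depends only on $t$, solving gives $\sigma_{0,1}=P_1=f_1(t)$.

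Next, applying $e_1$ to a generic $\sigma_{k,m}$ and expanding by means of (3.12) and (3.13) yields the single recurrence $e_1(\sigma_{k,m}) = (k+m)\sigma_{k,m+1} - k\lambda_1\sigma_{k-1,m+1} + m\lambda_1\sigma_{k+1,m-1} + mc\,\sigma_{k,m-1}$. I would now produce, in order, the quantities $\sigma_{1,1},\sigma_{2,1},\sigma_{0,2},\sigma_{1,2},\sigma_{0,3},\sigma_{1,3},\sigma_{0,4},\sigma_{0,5}$ by applying this recurrence at the respective inputs $(k,m)=(1,0),(2,0),(0,1),(1,1),(0,2),(1,2),(0,3),(0,4)$. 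At each step the desired target is isolated and every other $\sigma$ appearing on the right is either base data or has already been handled earlier in the list, so each entry is a function of $t$ alone; in particular this delivers $P_2,\ldots,P_5=\sigma_{0,2},\ldots,\sigma_{0,5}$.

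The main obstacle, and the reason the list stops exactly at $k=5$, is the $\sigma_{k+1,m-1}$ term in the recurrence, which raises the first index and forces one eventually back to some $\sigma_{j,0}=\sum_{i\geq 2}\lambda_i^{j}$. A direct check shows that going one step further to $P_6=\sigma_{0,6}$ requires $\sigma_{1,4}$, hence $\sigma_{2,2}$, hence $\sigma_{3,0}$; but only $\sigma_{j,0}$ for $j\leq 2$ is controlled by the data $(\lambda_1,R)$, and $\sum_{i\geq 2}\lambda_i^{3}$ cannot be expressed as a function of $t$ alone from what is available. This arithmetic is precisely what restricts the conclusion of the lemma to powers up to $5$.
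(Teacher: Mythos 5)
Your proposal is correct and is essentially the paper's own argument: your $\sigma_{1,1},\sigma_{2,1},\sigma_{1,2},\sigma_{1,3}$ are exactly the paper's auxiliary quantities $g_1,g_3,g_2,g_4$, and your single recurrence $e_1(\sigma_{k,m})=(k+m)\sigma_{k,m+1}-k\lambda_1\sigma_{k-1,m+1}+m\lambda_1\sigma_{k+1,m-1}+mc\,\sigma_{k,m-1}$ is just a uniform repackaging of the paper's steps of summing (3.12)--(3.13) after multiplying by $\lambda_i$ or powers of $\omega_{ii}^1$ and differentiating along $e_1$. The only addition is your closing observation on why the bootstrap stops at the fifth power (the need for $\sigma_{3,0}=\sum_{i\ge2}\lambda_i^3$), which is a correct and useful remark but not part of the lemma itself.
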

\begin{proof}
Since $e_1(\lambda_1)\neq0$, $\lambda_1=\lambda_1(t)$ and $R$ is
constant, (3.11) becomes
\begin{align}
\sum_{i=2}^n\omega_{ii}^1=f_1(t),
\end{align}
where
\begin{align*}
f_1(t)=\frac{e_1e_1(\lambda_1)-\lambda_1\big(n(n-2)c+4\lambda_1^2-R\big)}{e_1(\lambda_1)}.
\end{align*}
Taking the sum of   (3.13) and (3.12) for $i$ and taking into
account (3.2) and (3.18) respectively, we have
\begin{align}
\sum_{i=2}^n\big(\omega_{ii}^1\big)^2&=f_2(t),\\
\sum_{i=2}^n\lambda_i\omega_{ii}^1&=g_1(t),
\end{align}
where $f_2=3\lambda_1^2-(n-1)c+e_1(f_1)$ and
$g_1(t)=\lambda_1f_1-3e_1(\lambda_1)$.

Multiplying $\omega_{ii}^1$ on both sides of equation (3.13), we
have
\begin{align*}
\frac{1}{2}e_1\big((\omega_{ii}^1)^2\big)&=(\omega_{ii}^1)^3+\lambda_1\lambda_i\omega_{ii}^1+c\omega_{ii}^1.
\end{align*}

Taking the sum of the above equation and using (3.18)-(3.20), we
obtain
\begin{eqnarray}
\sum_{i=2}^n\big(\omega_{ii}^1\big)^3=f_3(t),
\end{eqnarray}
where $f_3=\frac{1}{2}e_1(f_2)-\lambda_1 g_1-cf_1$.

Differentiating (3.20) with respect to $e_1$ and using (3.12) and
(3.13), we have
\begin{eqnarray}
e_1(g_1)=2\sum_{i=2}^n\lambda_i\big(\omega_{ii}^1\big)^2+
\lambda_1\sum_{i=2}^n\lambda_i^2+c\sum_{i=2}^n\lambda_i-\lambda_1\sum_{i=2}^n\big(\omega_{ii}^1\big)^2.
\end{eqnarray}
Hence, from (3.2), (3.5) and (3.19) that (3.22) yields
\begin{eqnarray}
\sum_{i=2}^n\lambda_i\big(\omega_{ii}^1\big)^2=g_2(t),
\end{eqnarray}
where $g_2=\frac{1}{2}\big\{e_1(g_1)-
\lambda_1\big(n(n-1)c-R+3\lambda_1^2\big)+3c\lambda_1+\lambda_1f_2\big\}.$

Multiplying $(\omega_{ii}^1)^2$ on both sides of equation (3.13), we
have
\begin{align*}
\frac{1}{3}e_1\big((\omega_{ii}^1)^3\big)&=(\omega_{ii}^1)^4+\lambda_1\lambda_i(\omega_{ii}^1)^2+c(\omega_{ii}^1)^2.
\end{align*}
Taking the sum of the above equation for $i$ and applying (3.19),
(3.21) and (3.23), we obtain
\begin{eqnarray}
\sum_{i=2}^n\big(\omega_{ii}^1\big)^4=f_4(t),
\end{eqnarray}
where $f_4=\frac{1}{3}e_1(f_3)-\lambda_1 g_2-cf_2$.

Multiplying $\lambda_i$ on both sides of equation (3.12) gives
\begin{align*}
\lambda_i^2\omega_{ii}^1=\frac{1}{2}e_1(\lambda_i^2)+\lambda_1\lambda_i\omega_{ii}^1,
\end{align*}
which together with (3.5) and (3.20) yields
\begin{eqnarray}
\sum_{i=2}^n\lambda_i^2\omega_{ii}^1=g_3(t),
\end{eqnarray}
where $g_3=3\lambda_1e_1(\lambda_1)+\lambda_1g_1$.

Differentiating (3.23) with respect to $e_1$ and using
(3.12)-(3.13), we have
\begin{align}
e_1(g_2)=3\sum_{i=2}^n\lambda_i\big(\omega_{ii}^1\big)^3-
\lambda_1\sum_{i=2}^n\big(\omega_{ii}^1\big)^3+2\lambda_1\sum_{i=2}^n\lambda_i^2\omega_{ii}^1+2c\sum_{i=2}^n\lambda_i\omega_{ii}^1.
\end{align}
Substituting (3.20), (3.21) and (3.25) into (3.26) gives
\begin{eqnarray}
\sum_{i=2}^n\lambda_i\big(\omega_{ii}^1\big)^3=g_4(t),
\end{eqnarray}
where $g_4=\frac{1}{3}\big
(e_1(g_2)+\lambda_1f_3-2\lambda_1g_3-2cg_1\big).$

Multiplying $(\omega_{ii}^1)^3$ on both sides of equation (3.13), we
have
\begin{align*}
\frac{1}{4}e_1\big((\omega_{ii}^1)^4\big)&=(\omega_{ii}^1)^5+\lambda_1\lambda_i(\omega_{ii}^1)^3+c(\omega_{ii}^1)^3.
\end{align*}
After taking the sum of the above equation for $i$, using
(3.21), (3.24) and (3.27) we have
\begin{eqnarray}
\sum_{i=2}^n\big(\omega_{ii}^1\big)^5=f_5(t),
\end{eqnarray}
where $f_5=\frac{1}{4}e_1(f_4)-\lambda_1 g_4-cf_3$.

At this moment, the proof of Lemma 3.3 has been completed.
\end{proof}
\begin{lemma}
Assume that $R$ is constant. If the number $m$ of distinct principal
curvatures satisfies $m\leq6$, then $e_i(\lambda_j)=0$ for $2\leq i,
j\leq n$, i.e. all principal curvature $\lambda_i$ depend only on
one variable $t$.
\end{lemma}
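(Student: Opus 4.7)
The plan is to exploit the five power-sum identities (3.17) of Lemma 3.3 through a Vandermonde-type argument, differentiating them along the directions $e_j$ ($j\geq 2$) that annihilate $t$. The hypothesis $m\leq 6$ will enter precisely to ensure that the list of values of $\omega_{ii}^1$ for $i\geq 2$ contains at most five distinct entries, which matches exactly the number of power-sum identities available.

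First I would prove a block-matching step: whenever $\lambda_i=\lambda_j$ for distinct $i,j\in\{2,\ldots,n\}$, one has $\omega_{ii}^1=\omega_{jj}^1$. This follows by rewriting (3.12) as $e_1(\lambda_i)=(\lambda_i-\lambda_1)\omega_{ii}^1$, applying it to both $i$ and $j$, and using Lemma 3.1 to cancel the factor $\lambda_i-\lambda_1\neq 0$. On the (open, dense) locus where the multiplicities of the principal curvatures are locally constant, the sequence $\omega_{22}^1,\ldots,\omega_{nn}^1$ therefore takes at most $m-1\leq 5$ distinct values $\gamma_1,\ldots,\gamma_u$ with locally constant multiplicities $n_1,\ldots,n_u$, and (3.17) becomes
\begin{equation*}
\sum_{l=1}^{u} n_l\,\gamma_l^{k}=f_k(t),\qquad k=1,\ldots,5.
\end{equation*}

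Next I would differentiate these five identities along $e_j$ for each $j\geq 2$. Since the right-hand sides depend on $t$ only and $e_j(t)=0$, while the $n_l$ are constants, setting $z_l:=n_l\,e_j(\gamma_l)$ yields the linear system
\begin{equation*}
\sum_{l=1}^{u}\gamma_l^{k-1}\,z_l=0,\qquad k=1,\ldots,5.
\end{equation*}
The coefficient matrix is a $5\times u$ Vandermonde matrix built from the pairwise distinct $\gamma_l$; since $u\leq 5$, it has rank $u$, so $z_l=0$, that is $e_j(\gamma_l)=0$ for every $l$ and every $j\geq 2$. Thus each $\gamma_l$, and hence every $\omega_{ii}^1$, is a function of $t$ alone.

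To finish, I would feed this back into (3.13) rewritten as $\lambda_1\lambda_i=e_1(\omega_{ii}^1)-(\omega_{ii}^1)^2-c$. The right-hand side is now a function of $t$ alone, so $e_j(\lambda_1\lambda_i)=0$ for $j\geq 2$; combined with $e_j(\lambda_1)=0$, this gives $\lambda_1\,e_j(\lambda_i)=0$. Since $e_1(\lambda_1)\neq 0$ on the neighborhood, $\lambda_1(t)$ is strictly monotone in $t$ and so vanishes on at most one level set, which is nowhere dense; hence $e_j(\lambda_i)=0$ on an open dense subset, and by smoothness on the whole neighborhood. The main obstacle I anticipate is the bookkeeping of the block-matching step, including the need to further restrict to a locus on which the labeling $i\mapsto l(i)$ with $\omega_{ii}^1=\gamma_{l(i)}$ is locally constant so that the Vandermonde system retains constant shape; beyond this, the argument is essentially forced by the fact that one has exactly five power-sum identities, which is precisely why the hypothesis $m\leq 6$ is the natural ceiling for this method.
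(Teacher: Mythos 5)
Your proposal is correct and takes essentially the same route as the paper: differentiate the five power-sum identities of Lemma 3.3 along $e_i$ ($2\le i\le n$), use the nonvanishing Vandermonde determinant formed from the pairwise distinct values of $\omega_{jj}^1$ (at most five of them when $m\le 6$) to conclude $e_i(\omega_{jj}^1)=0$, and then feed this back into (3.13) to get $\lambda_1 e_i(\lambda_j)=0$, with your density argument handling the $\lambda_1=0$ locus (a point the paper actually glosses over). The only step to make explicit is why $e_1(\omega_{jj}^1)$ is again annihilated by the $e_i$: this needs the commutator fact $[e_i,e_1]=\sum_{k\ge2}(\omega_{i1}^k-\omega_{1i}^k)e_k$ (i.e. $\omega_{i1}^1=\omega_{1i}^1=0$), which is precisely how the paper argues via $e_ie_1(u_k)=0$.
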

\begin{proof}
Since the number $m$ of distinct principal curvatures satisfies
$m\leq6$, there are at most five distinct principal curvatures for
$\lambda_i$ $(2\leq i\leq n)$ except $\lambda_1$. It follows easily from (3.12) and (3.13) that
\begin{align*}
\lambda_i\neq \lambda_j\quad \Leftrightarrow\quad \omega_{ii}^1\neq \omega_{jj}^1.
\end{align*}

We now distinguish the following two cases:

{\bf Case} A. ~Suppose that $m=6$. We denote by $\widetilde{\lambda}_{i}$ the
five distinct principal curvatures with the corresponding
multiplicities $n_i$ for $1\leq i\leq5$. Note that here $n_i$ are
positive integers and $\sum_{i=1}^5n_i=n-1$ (see Lemma 3.1).
According to (3.12), let
$$u_i:=\frac{e_1(\widetilde{\lambda}_{i})}{\widetilde{\lambda}_{i}-\lambda_1}.$$
Thus, $u_i$ are mutually different for $1\leq i\leq 5$.

In this case, the system of polynomial equations (3.17) becomes
\begin{align}
\begin{cases}
n_1u_1+n_2u_2+n_3u_3+n_4u_4+n_5u_5=f_1,\\
n_1u^2_1+n_2u^2_2+n_3u^2_3+n_4u^2_4+n_5u^2_5=f_2,\\
n_1u^3_1+n_2u^3_2+n_3u^3_3+n_4u^3_4+n_5u^3_5=f_3,\\
n_1u^4_1+n_2u^4_2+n_3u^4_3+n_4u^4_4+n_5u^4_5=f_4,\\
n_1u^5_1+n_2u^5_2+n_3u^5_3+n_4u^5_4+n_5u^5_5=f_5.
\end{cases}
\end{align}
Since $e_i(f_1)=0$ for $2\leq i\leq n$, differentiating both sides
of equations in (3.29) with respect to $e_i$ $(2\leq i\leq n)$, we
obtain
\begin{align}
\begin{cases}
n_1 e_i(u_1)+n_2e_i(u_2)+n_3e_i(u_3)+n_4e_i(u_4)+n_5e_i(u_5)=0,\\
n_1u_1e_i(u_1)+n_2u_2e_i(u_2)+n_3u_3e_i(u_3)+n_4u_4e_i((u_4)+n_5u_5e_i(u_5)=0,\\
n_1u^2_1e_i(u_1)+n_2u^2_2e_i(u_2)+n_3u^2_3e_i(u_3)+n_4u^2_4e_i(u_4)+n_5u^2_5e_i(u_5)=0,\\
n_1u^3_1e_i(u_1)+n_2u^3_2e_i(u_2)+n_3u^3_3e_i(u_3)+n_4u^3_4e_i(u_4)+n_5u^3_5e_i(u_5)=0,\\
n_1u^4_1e_i(u_1)+n_2u^4_2e_i(u_2)+n_3u^4_3e_i(u_3)+n_4u^4_4e_i(u_4)+n_5u^4_5e_i(u_5)=0.
\end{cases}
\end{align}
Now consider this system of five linear equations with five unknowns
$e_i(u_k)$ for $ 1\leq k\leq5$.

According to Cramer's rule in linear algebra, for any $k$,
$e_i(u_k)\equiv 0$ holds true if and only if the determinant of the
coefficient matrix of (3.30) is not vanishing, i.e.
\begin{eqnarray}
\left | \begin{array}{ccccc} 1&1&1&1&1\\
u_1&u_2&u_3&u_4&u_5\\u^2_1&u^2_2&u^2_3&u^2_4&u^2_5
\\u^3_1&u^3_2&u^3_3&u^3_4&u^3_5\\
u^4_1&u^4_2&u^4_3&u^4_4&u^4_5
\end{array} \right |\neq 0.
\end{eqnarray}
We note that the determinant in (3.31) is the famous Vandermonde
determinant with order 5 and hence
\begin{eqnarray}
\left | \begin{array}{ccccc} 1&1&1&1&1\\
u_1&u_2&u_3&u_4&u_5\\u^2_1&u^2_2&u^2_3&u^2_4&u^2_5
\\u^3_1&u^3_2&u^3_3&u^3_4&u^3_5\\
u^4_1&u^4_2&u^4_3&u^4_4&u^4_5
\end{array} \right |=\prod_{1\leq j<i\leq 5}(u_i-u_j).
\end{eqnarray}
Since $u_i$ are mutually different for $1\leq i\leq 5$, (3.32) implies
that (3.31) holds true identically. Hence, we have $e_i(u_k)=0$ for any
$1\leq k\leq 5$ and $2\leq i\leq n$.

Therefore, by using  $e_i(u_k)=0$ and
\begin{eqnarray}
e_ie_1(u_k)-e_1e_i(u_k)=[e_i, e_1](u_k)=\sum_{j=2}^n(\omega_{i1}^j-\omega_{1i}^j)e_j(u_k),\nonumber
\end{eqnarray}
we get
\begin{eqnarray}
e_ie_1(u_k)=0.\nonumber
\end{eqnarray}

Noting that with the notation $u_k$, (3.13) becomes
$$e_1(u_k)=(u_k)^2+\lambda_1\lambda_k+c.$$

Differentiating the above equation with respect to $e_i$, by taking into account $e_i(u_k)=0$ and $e_ie_1(u_k)=0$ we derive
$$e_i(\lambda_k)=0$$
for any $1\leq k\leq 5$ and $2\leq i\leq
n$.

{\bf Case} B. ~Suppose $m\leq5$. Denote by $\widetilde{\lambda}_{i}$ the
distinct principal curvatures with the corresponding
multiplicities $n_i$ for $1\leq i\leq4$.
Then the number of different $u_i$ is less than or equal to four.
In the case that four ones of $u_i$ are mutually different,
it is needed only to consider the system (3.17)
for $k=1, 2, 3, 4$. A similar discussion as in Case A could yield the conclusion.
If three ones or less of $u_i$ are mutually different,
then the conclusion follows by some similar arguments as above.

Thus, we conclude Lemma 3.4.
\end{proof}
\begin{lemma}
For arbitrary three distinct principal curvatures $\lambda_i$,
$\lambda_j$ and $\lambda_k$ $(2\leq i, j, k\leq n)$, we have the following relations:
\begin{align}
&\omega_{ij}^k(\lambda_j-\lambda_k)=\omega_{ji}^k(\lambda_i-\lambda_k)=\omega_{kj}^i(\lambda_j-\lambda_i),\\
&\omega_{ij}^k\omega_{ji}^k+\omega_{jk}^i\omega_{kj}^i+\omega_{ik}^j\omega_{ki}^j=0,\\
&\omega_{ij}^k(\omega_{jj}^1-\omega_{kk}^1)=\omega_{ji}^k(\omega_{ii}^1-\omega_{kk}^1)=
\omega_{kj}^i(\omega_{jj}^1-\omega_{ii}^1).
\end{align}
\end{lemma}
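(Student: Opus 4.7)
The plan is to derive the three identities from the Gauss and Codazzi equations together with the antisymmetry relation (3.7) and the vanishing results (3.14)--(3.16) already established. Relation (1) will come directly from Codazzi, relation (3) from the Gauss equation, and relation (2) will turn out to be a purely algebraic consequence of (1).

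For (1), I apply the Codazzi identity (3.9) with the index substitution $(i,j,k) \to (j,k,i)$, which directly yields the first equality $\omega_{ij}^k(\lambda_j-\lambda_k) = \omega_{ji}^k(\lambda_i-\lambda_k)$. For the second equality, the cyclic permutation $(i,j,k) \to (k,i,j)$ in (3.9) gives $(\lambda_k-\lambda_i)\omega_{jk}^i = (\lambda_j-\lambda_i)\omega_{kj}^i$; applying (3.7) to rewrite $\omega_{jk}^i = -\omega_{ji}^k$ and multiplying through by $-1$ chains this expression onto the first equality.

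For (3), the approach is to apply the Gauss equation to $R(e_i,e_j)e_k$ and take the $e_1$-component. Since $i,j,k\geq 2$ are distinct indices, the Gauss formula forces $R(e_i,e_j)e_k = 0$. Expanding $\langle R(e_i,e_j)e_k, e_1\rangle = 0$ via $R(X,Y)Z = \nabla_X\nabla_Y Z - \nabla_Y\nabla_X Z - \nabla_{[X,Y]}Z$ and $\nabla_{e_a}e_b = \sum_c \omega_{ab}^c e_c$ produces terms of the form $e_a(\omega_{bc}^1)$ and $\omega_{bc}^l\,\omega_{al}^1$. The crucial simplification is that (3.14), (3.15), (3.16), and (3.10) together imply $\omega_{ab}^1 = 0$ whenever $a\neq b$ with $a,b\geq 2$, together with $\omega_{1a}^1 = 0$. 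These vanishings kill almost every term, leaving $\omega_{jk}^i\omega_{ii}^1 - \omega_{ik}^j\omega_{jj}^1 - (\omega_{ij}^k - \omega_{ji}^k)\omega_{kk}^1 = 0$; rewriting via (3.7) gives the first equality of (3). The second equality follows by repeating the same calculation for $R(e_k,e_j)e_i = 0$ (or $R(e_i,e_k)e_j = 0$) and again using (3.7).

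For (2), the identity is a purely algebraic consequence of (1) and (3.7). Assume $\omega_{ij}^k\neq 0$; then (1) determines $\omega_{ji}^k = \omega_{ij}^k(\lambda_j-\lambda_k)/(\lambda_i-\lambda_k)$ and $\omega_{kj}^i = \omega_{ij}^k(\lambda_j-\lambda_k)/(\lambda_j-\lambda_i)$, while (3.7) expresses $\omega_{jk}^i$, $\omega_{ik}^j$, $\omega_{ki}^j$ as scalar multiples of $\omega_{ij}^k$. Substituting into the left-hand side of (2) and factoring reduces the expression to a nonzero multiple of $(\lambda_j-\lambda_k) + (\lambda_i-\lambda_j) - (\lambda_i-\lambda_k) = 0$. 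In the degenerate case $\omega_{ij}^k = 0$, relation (1) forces $\omega_{ji}^k = \omega_{kj}^i = 0$ (using that $\lambda_i,\lambda_j,\lambda_k$ are pairwise distinct), so each summand in (2) vanishes by (3.7). The step requiring the most care will be the derivation of (3): tracking the index combinatorics so that the vanishings (3.14)--(3.16) collapse the Gauss-equation expansion exactly to the target identity is the main technical bookkeeping.
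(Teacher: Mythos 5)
Your proposal is correct and follows essentially the same route as the paper: (3.33) from the Codazzi relation (3.9) together with the antisymmetry (3.7), (3.34) as an algebraic consequence of these, and (3.35) by expanding $\langle R(e_i,e_j)e_k,e_1\rangle=0$ (and the analogous curvature component) using the vanishing of the mixed coefficients $\omega_{ab}^1$. Your treatment of (3.34), with the explicit substitution and the degenerate case $\omega_{ij}^k=0$, just spells out what the paper calls immediate.
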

\begin{proof}
We recall in the beginning part of this section that the number $m$
of distinct principal curvatures satisfies $m\geq4$. Hence, by
taking into account the second expression of (3.7) and (3.9) for
three distinct principal curvatures $\lambda_i$, $\lambda_j$ and
$\lambda_k$ $(2\leq i, j, k\leq n)$, we obtain (3.33) and (3.34)
immediately.

Let us consider (3.35). It follows from the Gauss equation that
\begin{eqnarray*}
\langle R(e_i,e_j)e_k, e_1\rangle=0.
\end{eqnarray*}
Moreover, since $\omega_{ij}^1=0$ for $i\neq j$ from (3.7) and
(3.16), from the definition of the curvature tensor we have
\begin{align}
\omega_{ij}^k(\omega_{jj}^1-\omega_{kk}^1)=\omega_{ji}^k(\omega_{ii}^1-\omega_{kk}^1).
\end{align}
Similarly, by considering $\langle R(e_j,e_k)e_i, e_1\rangle=0$ one also has
\begin{align*}
\omega_{jk}^i(\omega_{kk}^1-\omega_{ii}^1)=\omega_{kj}^i(\omega_{jj}^1-\omega_{ii}^1),
\end{align*}
which together with (3.7) and (3.36) gives (3.35).
\end{proof}
\begin{lemma}
Under the assumptions as above, we have
\begin{align}
\omega_{ii}^1\omega_{jj}^1-\sum_{k=2, ~k\neq l_{(i,j)}}^{n}2\omega_{ij}^k\omega_{ji}^k=-\lambda_i\lambda_j-c,
\quad \mathrm{for}~~\lambda_i\neq\lambda_j,
\end{align}
where $l_{(i,j)}$ stands for the indexes satisfying $\lambda_{l_{(i,j)}}=\lambda_i$ or $\lambda_j$.
\end{lemma}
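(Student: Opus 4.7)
The plan is to evaluate the sectional-curvature-type quantity $\langle R(e_i,e_j)e_i,e_j\rangle$ in two independent ways, namely via the Gauss equation and via the definition of the Riemann tensor in terms of the Levi-Civita connection, and then equate the two.

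On one hand, the Gauss equation stated in Section~2, together with the diagonal form (3.1) of the Weingarten operator, immediately yields
\begin{equation*}
\langle R(e_i,e_j)e_i,e_j\rangle \;=\; -c - \lambda_i\lambda_j,
\end{equation*}
which is exactly the right-hand side of (3.37). On the other hand, I would expand the same inner product using $R(X,Y)Z = \nabla_X\nabla_Y Z - \nabla_Y\nabla_X Z - \nabla_{[X,Y]}Z$ with the substitution $\nabla_{e_p}e_q = \sum_k \omega_{pq}^k e_k$, and then isolate the $e_j$-component. The resulting expression is a linear combination of derivative terms such as $e_i(\omega_{ji}^j)$ and $e_j(\omega_{ii}^j)$ together with quadratic expressions in the connection coefficients $\omega_{pq}^k$.

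To reshape this into the left-hand side of (3.37), I would apply three families of identities already available at this point of the paper: (a) the compatibility and skew-symmetry relations (3.7), in particular $\omega_{ki}^i = 0$ and $\omega_{ki}^j = -\omega_{kj}^i$; (b) the vanishings $\omega_{ij}^1 = 0$ and $\omega_{1i}^j = 0$ for distinct $i,j\geq 2$, which follow from (3.15)--(3.16) in combination with (3.7); and (c) the Codazzi-type symmetries (3.33) of Lemma~3.5. The vanishing in (b) implies that the only $e_1$-direction contribution surviving from $\nabla_{e_i}e_i$ and $\nabla_{e_j}e_j$ is the pair $\omega_{ii}^1,\omega_{jj}^1$, producing the diagonal term $\omega_{ii}^1\omega_{jj}^1$. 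The quadratic terms indexed by $k\geq 2$, after applying the skew-symmetries $\omega_{il}^j = -\omega_{ij}^l$, $\omega_{li}^j = -\omega_{lj}^i$ and the relation (3.33), reorganize into the symmetric cross sum $-2\sum_k \omega_{ij}^k\omega_{ji}^k$.

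The two main obstacles I foresee are the following. First, the derivative terms $e_i(\omega_{ji}^j)$ and $e_j(\omega_{ii}^j)$ arising from the expansion must be shown to cancel; I expect this to follow by differentiating the Codazzi equations (3.8)--(3.9), or by a symmetric regrouping that uses $\omega_{ji}^j = -\omega_{jj}^i$ and the analogous $\omega_{ij}^i = -\omega_{ii}^j$ from (3.7). Second, and more delicate, is the careful bookkeeping for those indices $l$ with $\lambda_l \in \{\lambda_i,\lambda_j\}$ (the set $l_{(i,j)}$): the relations (3.33)--(3.35) of Lemma~3.5 become degenerate there, so the contributions of those indices must be handled separately and either shown to vanish identically (using $\omega_{ki}^i = 0$) or to be absorbed into the diagonal part $\omega_{ii}^1\omega_{jj}^1$, which explains precisely why the summation range in (3.37) excludes them. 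This case analysis will be the most technical step of the proof.
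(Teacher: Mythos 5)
Your two-way computation of $\langle R(e_i,e_j)e_i,e_j\rangle$ is indeed the paper's strategy, but the step you defer as the ``first obstacle'' is exactly where the proof cannot be completed by the tools you list, and the missing ingredient is Lemma 3.4. The derivative terms $e_i(\omega_{ji}^j)$ and $e_j(\omega_{ii}^j)$, the coefficients $\omega_{ji}^j$, $\omega_{ii}^j$, $\omega_{jj}^i$, and the mixed sum $\sum_{k\geq 2}\omega_{ii}^k\omega_{jk}^j$ appearing in the expansion do not cancel against one another by differentiating (3.8)--(3.9) or by a skew-symmetric regrouping: by (3.8) one has $\omega_{ji}^j=e_i(\lambda_j)/(\lambda_i-\lambda_j)$, which is in general nonzero, and no algebraic identity forces $e_i(\omega_{ji}^j)-e_j(\omega_{ii}^j)$ to offset the leftover quadratic terms. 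What the paper does instead is invoke Lemma 3.4 ($e_k(\lambda_l)=0$ for all $2\leq k,l\leq n$, available because $R$ is constant and $m\leq 6$): this makes $\omega_{ji}^j$, $\omega_{ii}^j$, $\omega_{jj}^i$ vanish identically on the neighborhood, hence also their derivatives, and it kills every summand of $\sum_{k\geq 2}\omega_{ii}^k\omega_{jk}^j$, since for each $k\geq 2$ at least one of $\lambda_k\neq\lambda_i$, $\lambda_k\neq\lambda_j$ holds and (3.8) then annihilates the corresponding factor; only the $k=1$ term survives and yields $\omega_{ii}^1\omega_{jj}^1$. This is not a cosmetic omission: without Lemma 3.4 the identity (3.37) is simply false in general, so the hypotheses ``$R$ constant'' and ``at most six principal curvatures'' enter this lemma precisely through that citation, and your proof must make it.

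Two smaller corrections. The excluded indices $l_{(i,j)}$ are not ``absorbed into the diagonal part'': when $\lambda_k=\lambda_i$ (or $\lambda_j$), the Codazzi relation (3.9) combined with (3.7) gives $\omega_{ij}^k=\omega_{ik}^j=\omega_{ki}^j=0$ (this is (3.41)--(3.42)), so those cross terms vanish outright, which is the only reason the sum in (3.37) may omit them. Also, the identity that reorganizes the surviving quadratic terms into $-2\sum_k\omega_{ij}^k\omega_{ji}^k$ is (3.34) of Lemma 3.5 together with (3.7), rather than the proportionality relations (3.33) you cite.
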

\begin{proof}
In the following, we consider the case that the number $m$ of distinct principal curvatures is $6$.

Without loss of generality, except $\lambda_1$, we assume that
$\lambda_p, \lambda_q, \lambda_r, \lambda_u, \lambda_v$ are the five distinct
principal curvatures in sequence with the corresponding multiplicities $n_1, n_2, n_3, n_4, n_5$ respectively, i.e.
\begin{eqnarray*}
\lambda_1,~\underbrace{\lambda_p, \ldots, \lambda_p}_{n_1}
,~\underbrace{\lambda_q, \ldots, \lambda_q}_{n_2},
~\underbrace{\lambda_r, \ldots, \lambda_r}_{n_3},
~\underbrace{\lambda_u, \ldots, \lambda_u}_{n_4},
~\underbrace{\lambda_v, \ldots, \lambda_v}_{n_5}.
\end{eqnarray*}
We now compute $\langle R(e_p,e_q)e_p, e_q\rangle$. On one hand, it
follows from the Gauss equation and (3.1) that
\begin{eqnarray}
\langle R(e_p,e_q)e_p, e_q\rangle=-\lambda_p\lambda_q-c.
\end{eqnarray}
On the other hand, since
\begin{eqnarray*}
&&\nabla_{e_p}\nabla_{e_q}e_p=\sum_{k=1}^{n}e_p\big(\omega_{qp}^k\big)e_k+
\sum_{k=1}^{n}\omega_{qp}^k\sum_{l=1}^{n}\omega_{pk}^le_l,\\
&&\nabla_{e_q}\nabla_{e_p}e_p=\sum_{k=1}^{n}e_q\big(\omega_{pp}^k\big)e_k+
\sum_{k=1}^{n}\omega_{pp}^k\sum_{l=1}^{n}\omega_{qk}^le_l,\\
&&\nabla_{[e_p,e_q]}e_p=\sum_{k=1}^{n}\big(\omega_{pq}^k-\omega_{qp}^k\big)\sum_{l=1}^{n}\omega_{kp}^le_l,
\end{eqnarray*}
it follows that
\begin{align}
\langle R(e_p,e_q)e_p, e_q\rangle&=e_p\big(\omega_{qp}^q\big)+\sum_{k=1}^{n}\omega_{qp}^k\omega_{pk}^q
-e_q\big(\omega_{pp}^q\big)\\
&-\sum_{k=1}^{n}\omega_{pp}^k\omega_{qk}^q-
\sum_{k=1}^{n}\big(\omega_{pq}^k-\omega_{qp}^k\big)\omega_{kp}^q.\nonumber
\end{align}
Since $\lambda_p\neq\lambda_q$, from (3.8), (3.7) and Lemma 3.4
we have
\begin{align}
\omega_{qp}^q=\omega_{qq}^p=\omega_{pp}^q=0,~~\mathrm{and}~~\sum_{k=2}^{n}\omega_{pp}^k\omega_{qk}^q=0.
\end{align}
Moreover, if $2\leq k\leq n_1+1$, then $\lambda_k=\lambda_p$, by the second expression of (3.7) and (3.9) we get
\begin{align*}
(\lambda_p-\lambda_k)\omega_{qp}^k=(\lambda_q-\lambda_k)\omega_{pq}^k,~~\mathrm{and}~~
(\lambda_k-\lambda_q)\omega_{pk}^q=(\lambda_p-\lambda_q)\omega_{kp}^q,
\end{align*}
which imply that
\begin{align}
\omega_{pq}^k=\omega_{pk}^q=\omega_{kp}^q=0.
\end{align}
Similarly, if $n_1+2\leq k\leq n_1+n_2+1$, we also have
\begin{align}
\omega_{pq}^k=\omega_{pk}^q=\omega_{kp}^q=0.
\end{align}
Hence, by taking into account (3.40)-(3.42), (3.39) becomes
\begin{align*}
\langle R(e_p,e_q)e_p, e_q\rangle=\omega_{pp}^1\omega_{qq}^1+ \sum_{k=n_1+n_2+2}^{n}\Big\{\omega_{qp}^k\omega_{pk}^q
-\big(\omega_{pq}^k-\omega_{qp}^k\big)\omega_{kp}^q\Big \},\nonumber
\end{align*}
which together with (3.38), (3.7) and (3.34) gives
\begin{align}
\omega_{pp}^1\omega_{qq}^1-\sum_{k=n_1+n_2+2}^{n}2\omega_{pq}^k\omega_{qp}^k=-\lambda_p\lambda_q-c.
\end{align}
Similarly, we could deduce other equations for different pairs
$\omega_{pp}^1\omega_{rr}^1, \omega_{pp}^1\omega_{uu}^1, \cdots$.
Hence we get equation (3.37).

In the case that the number $m$ of distinct principal curvatures
satisfies $m=4$, or $5$, a very similar argument gives (3.37) as well.
\end{proof}

\section{Proof of Theorem 1.1}
Assume that the mean curvature $H$ is not constant.

Differentiating (3.2) with respect to $e_1$ and using (3.12)-(3.13), we obtain
\begin{align}
3e_1(\lambda_1)=\sum_{i=2}^n(\lambda_1-\lambda_i)\omega_{ii}^1.
\end{align}
Following the previous section, we only deal with the case that the
number of distinct principal curvatures is $6$, i.e. $m=6$. In fact,
the proofs for the cases that $m=5$, $4$ are very similar, so we
omit it here without loss of generality.

According to Lemma 3.5, we consider the following cases:

{\bf Case} A. $\omega_{pq}^r\neq0, \omega_{pq}^u\neq0$, and
$\omega_{pq}^v\neq0$. Since $\lambda_p, \lambda_q, \lambda_r,
\lambda_u,\lambda_v$ are mutually different, equations (3.33) and
(3.35) reduce to
\begin{align*}
\frac{\omega_{pp}^1-\omega_{qq}^1}{\lambda_p-\lambda_q}
&=\frac{\omega_{pp}^1-\omega_{rr}^1}{\lambda_p-\lambda_r}
=\frac{\omega_{qq}^1-\omega_{rr}^1}{\lambda_q-\lambda_r}\\
&=\frac{\omega_{pp}^1-\omega_{uu}^1}{\lambda_p-\lambda_u}
=\frac{\omega_{qq}^1-\omega_{uu}^1}{\lambda_q-\lambda_u}\nonumber\\
&=\frac{\omega_{pp}^1-\omega_{vv}^1}{\lambda_p-\lambda_v}
=\frac{\omega_{qq}^1-\omega_{vv}^1}{\lambda_q-\lambda_v}.\nonumber\
\end{align*}
Thus, there exist two smooth functions $\varphi$ and $\psi$ depending on $t$ such that
\begin{align}
\omega_{ii}^1=\varphi \lambda_i+\psi.
\end{align}
Differentiating with respect to $e_1$ on both sides of equation (4.2), and using (3.12) and (3.13) we get
\begin{align}
&e_1(\varphi)=\lambda_1(\varphi^2+1)+\varphi\psi,\\
&e_1(\psi)=\psi(\lambda_1\varphi+\psi)+c.
\end{align}
Taking into account (4.2), and using (3.2), (3.5) one has
\begin{align*}
\sum_{i=2}^n\omega_{ii}^1=-3\lambda_1\varphi+(n-1)\psi,
\end{align*}
 and (4.1) and (3.11) respectively become
\begin{align}
&3e_1(\lambda_1)=\big(R-n(n-1)c-6\lambda_1^2\big)\varphi+(n+2)\lambda_1\psi,\\
&e_1e_1(\lambda_1)=e_1(\lambda_1)(-3\lambda_1\varphi+(n-1)\psi)+\lambda_1\big(n(n-2)c-R+4\lambda_1^2\big).
\end{align}
Differentiating (4.5) with respect to $e_1$, we may eliminate
$e_1e_1(\lambda_1)$ by (4.6). Using (4.3), (4.4) and (4.6) we have
\begin{align}
3(n-4)e_1(\lambda_1)\psi=\lambda_1\big(6R-(4n^2-12n-3)c-27\lambda_1^2\big).
\end{align}
Note here that $n>4$ since the number of distinct principal curvatures is six.

Eliminating $e_1(\lambda_1)$ between (4.5) and (4.7) gives
\begin{align}
(n-4)&\Big\{\big(R-n(n-1)c-6\lambda_1^2\big)\varphi\psi+(n+2)\lambda_1\psi^2\Big\}\\
&=\lambda_1\big(6R-(4n^2-12n-3)c-27\lambda_1^2\big).\nonumber
\end{align}
Moreover, differentiating (4.7) with respect to $e_1$, by (4.4), (4.6), (4.7) we have
\begin{align}
\big(432\lambda_1^4+a_1\lambda_1^2+
a_2\big)\varphi+\big\{-54(n+3)\lambda_1^3+a_3\lambda_1\big\}\psi=12(n-4)\lambda_1^3+a_4\lambda_1,
\end{align}
where
\begin{align*}
&a_1=(97n^2-111n+60)c-105R, \\
&a_2=\big((4n^2-9n+9)c-6R\big)\big(n(n-1)c-R\big),\\
&a_3=12R-(4n^2-6n+21)c,\\
& a_4=3n(n-4)(n-2)c.
\end{align*}
Differentiating (4.9) with respect to $e_1$ and  using (4.3)-(4.4),
we get
\begin{align}
&\big(1728\lambda_1^3+2a_1\lambda_1\big)\varphi
e_1(\lambda_1)+\big(432\lambda_1^4+a_1\lambda_1^2+
a_2\big)\big\{\lambda_1(\varphi^2+1)+\varphi\psi\big\}\nonumber\\
&+\big\{-162(n+3)\lambda_1^2+a_3\big\}\psi e_1(\lambda_1)+
\big\{-54(n+3)\lambda_1^3+a_3\lambda_1\big\}\big\{\psi(\lambda_1\varphi+\psi)+c
\big\} \nonumber\\
&=(36(n-4)\lambda_1^2+a_4)e_1(\lambda_1).\nonumber
\end{align}
Multiplying $3(n-4)$ on both sides of the above equation and using
(4.5) and (4.7) we have
\begin{align}
&(n-4)\big(1728\lambda_1^3+2a_1\lambda_1\big)\varphi
\big\{\big(R-n(n-1)c-6\lambda_1^2\big)\varphi+(n+2)\lambda_1\psi\big\}\\
&+3(n-4)\big(432\lambda_1^4+a_1\lambda_1^2+
a_2\big)\big\{\lambda_1(\varphi^2+1)+\varphi\psi\big\}\nonumber\\
&+\lambda_1\big\{-162(n+3)\lambda_1^2+a_3\big\}\big\{6R-(4n^2-12n-3)c-27\lambda_1^2\big\}\nonumber\\
&+3(n-4)
\big\{-54(n+3)\lambda_1^3+a_3\lambda_1\big\}\big\{\psi(\lambda_1\varphi+\psi)+c
\big\} \nonumber\\
&=(n-4)\big(36(n-4)\lambda_1^2+a_4\big)\big\{\big(R-n(n-1)c-6\lambda_1^2\big)\varphi+(n+2)\lambda_1\psi\big\}.\nonumber
\end{align}
Note that equation (4.10) could be rewritten as
\begin{align}
q_1(\lambda_1)\varphi^2+q_2(\lambda_1)\varphi\psi+q_3(\lambda_1)\psi^2+
q_4(\lambda_1)\varphi+q_5(\lambda_1)\psi+q_6(\lambda_1)=0,
\end{align}
where $q_i$ are non-trivial polynomials concerning function
$\lambda_1$ and given by:
\begin{equation}
\begin{cases}
q_1=(n-4)\big(1728\lambda_1^3+2a_1\lambda_1\big)
\big(R-n(n-1)c-6\lambda_1^2\big)\\
~~\quad\quad +3(n-4)\big(432\lambda_1^4+a_1\lambda_1^2+
a_2\big)\lambda_1,\\
q_2=(n-4)(n+2)\lambda_1\big(1728\lambda_1^3+2a_1\lambda_1\big)\\
~~\quad\quad+3(n-4)\big(432\lambda_1^4+a_1\lambda_1^2+
a_2\big)\\
~~\quad\quad+3(n-4)\big\{-54(n+3)\lambda_1^3+a_3\lambda_1\big\}\lambda_1,\\
q_3=3(n-4)\big\{-54(n+3)\lambda_1^3+a_3\lambda_1\big\},\\
q_4=(n-4)\big(36(n-4)\lambda_1^2+a_4\big)\big(R-n(n-1)c-6\lambda_1^2\big),\\
q_5=-(n-4)(n+2)\big(36(n-4)\lambda_1^2+a_4\big)\lambda_1,\\
q_6=-3(n-4)\big(432\lambda_1^4+a_1\lambda_1^2+a_2\big)\lambda_1\\
~~\quad\quad+\lambda_1\big(-162(n+3)\lambda_1^2+a_3\big)\big\{6R-(4n^2-12n-3)c-27\lambda_1^2\big\}\\
~~\quad\quad+ 3c(n-4)
\big\{-54(n+3)\lambda_1^3+a_3\lambda_1\big\}.
\end{cases}
\end{equation}
In the same manner, (4.8) and (4.9) could be also rewritten
respectively as:
\begin{align}
&p_1(\lambda_1)\varphi\psi+p_2(\lambda_1)\psi^2=p_3(\lambda_1),\\
&h_1(\lambda_1)\varphi+h_2(\lambda_1)\psi=h_3(\lambda_1),
\end{align}
where $p_i, h_i$ $(i=1, 2)$ are polynomials concerning function
$\lambda_1$ and given by
\begin{align}
\begin{cases}
p_1=(n-4)\big(R-n(n-1)c-6\lambda_1^2\big),\\
p_2=(n-4)(n+2)\lambda_1,\\
p_3=\lambda_1\big(6R-(4n^2-12n-3)c-27\lambda_1^2\big),\\
h_1=432\lambda_1^4+a_1\lambda_1^2+
a_2,\\
h_2=-54(n+3)\lambda_1^3+a_3\lambda_1,\\
h_3=12(n-4)\lambda_1^3+a_4\lambda_1.
\end{cases}
\end{align}
Multiplying $h_1^2$ on both sides of the equation (4.11), by taking
into account (4.14) we may eliminate $\varphi$ and get
\begin{align}
P_1\psi^2+P_2\psi=P_3,
\end{align}
where
\begin{align}
\begin{cases}
P_1=q_1h_2^2-q_2h_1h_2+q_3h_1^2,\\
P_2=-2q_1h_2h_3+q_2h_1h_3-q_4h_1h_2+q_5h_1^2,\\
P_3=-q_1h_3^2-q_4h_1h_3-q_6h_1^2.
\end{cases}
\end{align}
Similarly, eliminating $\varphi$ in (4.13) by using (4.14) yields
\begin{align}
Q_1\psi^2+Q_2\psi=Q_3,
\end{align}
where
\begin{align}
\begin{cases}
Q_1=p_2h_1-p_1h_2,\\
Q_2=p_1h_3,\\
Q_3=p_3h_1.
\end{cases}
\end{align}
Moreover, multiplying $Q_1$ and $P_1$ on both sides of the equations
(4.16) and (4.18) respectively, after eliminating the `$\psi^2$'
part we obtain
\begin{align}
(P_2Q_1-P_1Q_2)\psi=P_3Q_1-P_1Q_3.
\end{align}
Multiplying $P_1\psi$ on (4.20) and then combining this with (4.16)
give
\begin{align}
\big\{P_1(P_3Q_1-P_1Q_3)+P_2(P_2Q_1-P_1Q_2)\big\}\psi=P_3(P_2Q_1-P_1Q_2).
\end{align}
At last, after eliminating $\psi$ between (4.20) and(4.21) we get
\begin{align}
P_1(P_3Q_1-P_1Q_3)^2+P_2(P_2Q_1-P_1Q_2)(P_3Q_1-P_1Q_3)
\\=P_3(P_2Q_1-P_1Q_2)^2.\nonumber
\end{align}
We observe from (4.12), (4.15), (4.17) and (4.19) that both $P_i$ and $Q_i$ $(1\leq i\leq 3)$ are polynomials
concerning $\lambda_1$ with constant coefficients. Hence, it follows that
\begin{align}
&P_1=-10077696(n-4)(n+3)(n-1)\lambda_1^{11}+ \cdots,\nonumber\\
&P_2=-839808(n-4)^2(11n+5)\lambda_1^{11}+ \cdots,\nonumber\\
&P_3=-69984(19n+113)\lambda_1^{13}+\cdots,\nonumber\\
&Q_1=108(n-4)(n-1)\lambda_1^{5}+\cdots,\nonumber\\
&Q_2=-72(n-4)^2\lambda_1^{5}+\cdots,\nonumber\\
&Q_3=-11664\lambda_1^{7}+\cdots,\nonumber
\end{align}
where we only need to write the highest order terms of $\lambda_1$.

By substituting $P_i$ and $Q_i$ into equation (4.22), we get a
polynomial equation concerning $\lambda_1$ with constant
coefficients $c_i=c_i(n, c, R)$:
\begin{align}
\sum_{i=0}^{47} c_i\lambda_1^i=0,
\end{align}
where the coefficient $c_{47}$ of the highest order term  satisfies
\begin{align}
c_{47}&=-10077696(n-4)^2(n+3)(n-1)^2\big[69984\times
108(19n+113)\nonumber\\
&\quad+10077696\times 11664(n+3)\big]^2\neq0.\nonumber
\end{align}
Therefore, $\lambda_1$ has to be constant and $H=-2\lambda_1/n$ is a
constant, which is a contradiction.

{\bf Case} B. $\omega_{pq}^r\neq0, \omega_{pq}^u\neq 0$, and
$\omega_{ij}^k=0$ for all other distinct triplets $\{i, j, k\}$ and
distinct principal curvatures $\lambda_i, \lambda_j, \lambda_k$.
Then, (3.37) implies that
\begin{align}
\omega_{pp}^1\omega_{vv}^1=-\lambda_p\lambda_v-c,\\
\omega_{qq}^1\omega_{vv}^1=-\lambda_q\lambda_v-c,\\
\omega_{rr}^1\omega_{vv}^1=-\lambda_r\lambda_v-c,\nonumber\\
\omega_{uu}^1\omega_{vv}^1=-\lambda_u\lambda_v-c.\nonumber
\end{align}
Similar to Case A, since $\omega_{pq}^r\neq0, \omega_{pq}^u\neq 0$,
(3.33) and (3.35) imply that
\begin{align}
\omega_{ii}^1=\varphi \lambda_i+\psi,\quad\mathrm{for}~~i=p, q, r,
u.
\end{align}
where $\varphi$ and $\psi$ satisfy the differential equations (4.3)
and (4.4).

Substituting (4.26) into (4.24) and (4.25), we obtain
\begin{align}
&\omega_{vv}^1=-\frac{1}{\varphi}\lambda_v,\\
&\lambda_v\psi=c\varphi,
\end{align}
which means that $\omega_{vv}^1$ and $\lambda_v$ are determined
completely by $\varphi$ and $\psi$.

 Substitute (4.26)-(4.28) into (4.1), and then
differentiate it with respect to $e_1$. By using (4.3), (4.4) and
(3.11), a similar discussion as Case A could give a polynomial
concerning function $\lambda_1$ with constant coefficients. Hence,
$\lambda_1$ has to be constant, which yields a contradiction as
well.

{\bf Case} C.  $\omega_{pq}^r\neq0$ (or $\omega_{pq}^r=0$), and all the $\omega_{ij}^k=0$ for distinct
triplets $\{i, j, k\}$ and distinct principal curvatures $\lambda_i, \lambda_j, \lambda_k$. Then, (3.37) implies that
\begin{align}
&\omega_{pp}^1\omega_{uu}^1=-\lambda_p\lambda_u-c,\quad \omega_{pp}^1\omega_{vv}^1=-\lambda_p\lambda_v-c,\\
&\omega_{qq}^1\omega_{uu}^1=-\lambda_q\lambda_u-c,\quad \omega_{qq}^1\omega_{vv}^1=-\lambda_q\lambda_v-c,\\
&\omega_{rr}^1\omega_{uu}^1=-\lambda_r\lambda_u-c,\quad \omega_{rr}^1\omega_{vv}^1=-\lambda_r\lambda_v-c,\\
&\omega_{uu}^1\omega_{vv}^1=-\lambda_u\lambda_v-c.
\end{align}
We first consider $\lambda_i\neq0$ for $i=p, q, r, u, v$.
Consequently, (4.29)-(4.32) reduce to
\begin{align*}
&\frac{\omega_{pp}^1}{\lambda_p}=\frac{\omega_{qq}^1}{\lambda_q}=\frac{\omega_{rr}^1}{\lambda_r}=-
\frac{\lambda_u-\lambda_v}{\omega_{uu}^1-\omega_{vv}^1},\\
&\frac{\omega_{uu}^1}{\lambda_u}=\frac{\omega_{vv}^1}{\lambda_v}=-
\frac{\lambda_p-\lambda_q}{\omega_{pp}^1-\omega_{qq}^1},
\end{align*}
and hence
\begin{align}
&\frac{\omega_{pp}^1}{\lambda_p}=\frac{\omega_{qq}^1}{\lambda_q}=\frac{\omega_{rr}^1}{\lambda_r}=\varphi,\\
&\frac{\omega_{uu}^1}{\lambda_u}=\frac{\omega_{vv}^1}{\lambda_v}=\psi
\end{align}
for two functions $\varphi$ and $\psi$.

Substituting (4.33) and (4.34) back to (4.29) gives
\begin{align*}
(1+\varphi\psi)\lambda_p\lambda_u=-c,\\
(1+\varphi\psi)\lambda_p\lambda_v=-c,
\end{align*}
which imply that $\lambda_u=\lambda_v$. This is impossible.

If $\lambda_p=0$, then (3.12) and (4.29) imply that
$\omega_{pp}^1=0$ and $c=0$. Then (4.30) and (4.31) yield
\begin{align}
\frac{\omega_{uu}^1}{\lambda_u}=\frac{\omega_{vv}^1}{\lambda_v}=\gamma
\end{align}
for some function $\gamma$. However, combining (4.35) with (4.32)
gives $\gamma^2=-1$. Hence it is a contradiction.

At last, we consider $\lambda_u=0$. Then  (3.12) and (4.29) reduce
to $\omega_{uu}^1=c=0$. The second equations of (4.29)-(4.31) show
that
\begin{align}
&\frac{\omega_{pp}^1}{\lambda_p}=\frac{\omega_{qq}^1}{\lambda_q}=\frac{\omega_{rr}^1}{\lambda_r}=\varphi,\\
&\frac{\omega_{vv}^1}{\lambda_v}=-\frac{1}{\varphi}.
\end{align}
By taking into account (4.36) and (4.37) together with (3.11) and
(4.1), a very similar and direct computation as Case A also
gives a polynomial concerning function $\lambda_1$ with constant
coefficients. Hence, this is a contradiction and the mean curvature
$H$ has to be constant.

In conclusion, we complete the proof of Theorem 1.1.

\medskip

\noindent {\bf Acknowledgement:} {The authors would like to thank
Professor Cezar Oniciuc for fruitful discussions and useful
suggestions. The authors also wish to express appreciation to the anonymous referee for his helpful suggestions to improve the original version of this paper. The first
author was supported by NSFC (No.11601068), CSC of China (Grant No.201508210004)
 and China Postdoctoral Science Foundation (No.2016T90226, No.2014M560216).
The second author was supported by the Australian Research Council
grant (DP150101275).}


\end{document}